\date{January 2022}
\theoremstyle{plain}
\newtheorem{theorem}{Theorem}[section]
\newtheorem{corollary}[theorem]{Corollary}
\newtheorem{lemma}[theorem]{Lemma}
\newtheorem{proposition}[theorem]{Proposition}
\theoremstyle{definition}
\newtheorem{remark}[theorem]{Remark}
\newcommand{\G}{\Gamma}
\renewcommand{\S}{\Sigma}
\newcommand{\bk}{\mathbf{k}}
\newcommand{\bd}{\partial}
\newcommand{\cM}{\mathcal{M}}
\newcommand{\cI}{\mathcal{I}}
\newcommand{\cO}{\mathcal{O}}
\newcommand{\RR}{\mathbb{R}}
\newcommand{\CC}{\mathbb{C}}
\newcommand{\PP}{\mathbb{P}}
\newcommand{\ZZ}{\mathbb{Z}}
\newcommand{\x}{\times}
\newcommand{\la}{\langle}
\newcommand{\ra}{\rangle}
\newcommand{\frM}{{\frak M}}
\newcommand{\frX}{{\frak X}}
\DeclareMathOperator{\stab}{Stab}
\DeclareMathOperator{\Hom}{Hom}
\DeclareMathOperator{\Gr}{Gr}
\DeclareMathOperator{\GL}{GL}
\DeclareMathOperator{\PGL}{PGL}
\DeclareMathOperator{\SL}{SL}
\DeclareMathOperator{\Spec}{Spec}
\DeclareMathOperator{\tr}{tr}
\providecommand\sslash{\mathbin{/\mkern-5.5mu/}}
\title{Coordinate rings of some $\SL_2$-character varieties}
\subjclass[2020]{Primary: 14M35. Secondary: 20G05, 14D20}
\keywords{Characters, representations, trace of matrix}
\author[V. Mu\~{n}oz]{Vicente Mu\~{n}oz}
 \address{Departamento de \'Algebra, Geometr\'ia y Topolog\'ia, Facultad de Ciencias,
  Universidad de M\'alaga, Campus de Teatinos s/n, 29071 Málaga,  Spain}
  \email{vicente.munoz@uma.es}
\author[J. Mart\'{\i}n-Ovejero]{Jes\'us Mart\'{\i}n Ovejero}
 \address{Departamento de M\'atem\'aticas, Facultad de Ciencias, Universidad de Salamanca, Plaza de la Merced 1, 37008 Salamanca, Spain}
\email{lemurx@usal.es}
\begin{document}

\begin{abstract}
 We determine generators of the coordinate ring of 
 $\SL_2$-character varieties. In the case of the free group $F_3$ we obtain
 an explicit equation of the $\SL_2$-character variety.
 For free groups $F_k$ we find transcendental generators. Finally, for the case
 of the $2$-torus, we get an explicit equation of the $\SL_2$-character variety
 and use the description to compute their $E$-polynomials.
\end{abstract}

\maketitle

\section{Introduction}\label{sec:introduction}

Let $\Gamma$ be a finitely generated group and let $G$ be an algebraic group over an
algebraically closed field $k$. The character variety $\frX(\G,G)$ parametrizes isomorphism
classes of representations $\rho: \Gamma \to G$. Character varieties are rich objects that contain geometric information linking
distant areas in mathematics. An important instance is when we take $\Gamma_g = \pi_1(\Sigma_g)$
to be the fundamental group of the compact orientable surface of genus $g\geq 1$. In this case, these character varieties are
one of the three incarnations of the moduli space of Higgs bundles, as stated by the celebrated
non-abelian Hodge correspondence \cite{Corlette:1988,hitchin,SimpsonI}. For this reason, character varieties of surface groups
have been widely studied, particularly regarding the computation of some algebraic invariants like their
$E$-polynomial.

Character varieties also play a prominent role in the topology of $3$-manifolds, starting
with the foundational work of Culler and Shalen \cite{CS}, where the authors used 
algebro-geometric properties of $\SL_2(\CC)$-character varieties to provide new proofs of remarkable results, as Thurston theorem that
says that the space of hyperbolic structures on an acylindrical $3$-manifold is compact, or the Smith conjecture.
Character varieties of $3$-manifolds have been also used to study knots $K\subset S^3$, by analyzing 
the character variety associated to the fundamental group of their complement, $\Gamma_K = \pi_1(S^3-K)$. 
For instance, the geometry of these knot character varieties has been studied in 
\cite{Florentino-Nozad-Zamora:2019,Lawton,LM} for trivial links (i.e.\ when $\Gamma$ is a free group),
and in \cite{KM, Martin-Oller, Munoz, MP} for the torus knot, among others.

Fix $G=\SL_r$, the group of matrices of size $r$ with trivial determinant.
If we have a presentation $\G=\la x_1,\ldots, x_k | r_1,\ldots, r_s \ra$, then $\frX(\G,G)$ parametrizes 
$k$-tuples $(A_1,\ldots, A_k)$ of matrices in $G$ 
subject to the relations $r_j(A_1,\ldots, A_k)=I$.
In the same vein as the isomorphism class of a matrix $A$ is determined by the traces of
its powers, $\tr(A^i)$, $1\leq i \leq r-1$, the $k$-tuples $(A_1,\ldots, A_k)$ are determined
by the traces of suitable products of the matrices. In this paper, we focus on the group $\SL_2$ and work out many
identities with traces. This serves to find coordinates for the character variety $\frX(\G,\SL_2)$.
Note that there is a natural  embedding $\frX(\G,\SL_2)\subset \frX(F_k,\SL_2)$, where
$F_k$ is the free group of $k$ elements, and $k$ is the number of generators of $\G$.
Therefore it is natural to look initially to the case of the free group.

The structure of $\SL_2(\CC)$-character varieties of free groups has been well-understood 
for some time. There is a modern treatment in \cite{Goldman}, where historical references can be found.
We thank Sean Lawton for pointing this out to us. More generally, there is an effective algorithm to 
compute the coordinate ring of any $\SL_2(\CC)$-character variety for any finitely presentable group in \cite{ABL}.
Here we obtain in an alternative way some of these coordinate rings, and connect with results of \cite{LMN} about
their $E$-polynomials. We start with the following:

\begin{theorem}\label{thm:3-intro}
 Let $A_1,\ldots, A_k\in \SL_2$, then the character variety $\frX_k=\frX(F_k, \SL_2)$ is parametri\-zed by
 $T_{i_1\ldots i_p}:= t_{A_{i_1}\ldots A_{i_p}}$, $i_1<\ldots < i_p$ with $p=1,2,3$. 
\end{theorem}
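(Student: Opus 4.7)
By Procesi's theorem on trace invariants of matrix tuples, the coordinate ring $\cO(\frX_k)$ is generated by the trace functions $t_w\colon(A_1,\ldots,A_k)\mapsto \tr(w(A_1,\ldots,A_k))$ as $w$ ranges over $F_k$. The claim thus reduces to expressing each $t_w$ as a polynomial in the $T_{i_1\ldots i_p}$ with $p\le 3$ and $i_1<\cdots<i_p$.

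The workhorse is the Cayley--Hamilton identity $A+A^{-1}=\tr(A)I$ in $\SL_2$, which yields the Fricke identity $\tr(AB)+\tr(AB^{-1})=\tr(A)\tr(B)$. The reduction proceeds in three steps. First, iterating $\tr(A_i^{-1}X)=\tr(A_i)\tr(X)-\tr(A_iX)$ eliminates inverses, turning $t_w$ into a polynomial in traces of positive words $A_{i_1}\cdots A_{i_p}$. Second, using $A_i^2=\tr(A_i)A_i-I$ one obtains $\tr(A_i^2X)=\tr(A_i)\tr(A_iX)-\tr(X)$ which, together with cyclic invariance of trace, allows repeated generators to be removed so that the remaining words have pairwise distinct indices. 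Third, one reduces the length via the $\SL_2$ identity, valid for any $M_1,M_2,M_3\in\SL_2$,
$$\tr(M_1M_2M_3)+\tr(M_2M_1M_3)=\tr(M_1)\tr(M_2M_3)+\tr(M_2)\tr(M_1M_3)+\tr(M_1M_2)\tr(M_3)-\tr(M_1)\tr(M_2)\tr(M_3),$$
derived from two applications of the Fricke identity. Taking the $M_i$ to be single generators or short products yields several relations among the cyclic equivalence classes of length-$4$ words; combined with cyclic invariance, these form a linear system whose solution expresses each length-$4$ trace as a polynomial in length-$\le 3$ traces, and an analogous iteration handles $p\ge 5$.

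For the index ordering: $p=2$ is immediate from cyclicity $t_{A_iA_j}=t_{A_jA_i}$. For $p=3$, specializing the identity above to $M_1=A$, $M_2=B$, $M_3=C$ and using $\tr(BAC)=\tr(ACB)$ gives
$$\tr(ABC)+\tr(ACB)=\tr(A)\tr(BC)+\tr(B)\tr(AC)+\tr(C)\tr(AB)-\tr(A)\tr(B)\tr(C),$$
so interchanging the last two letters introduces only length-$\le 2$ corrections; cyclicity handles cyclic permutations, and together these force the ordering $i_1<i_2<i_3$. The main obstacle is the length-$4$ step: each instance of the central identity relates only a \emph{sum} of two length-$4$ traces to length-$\le 3$ traces, so isolating a single $\tr(A_{i_1}A_{i_2}A_{i_3}A_{i_4})$ requires combining several instances (spanning different cyclic classes) and solving a small linear system over these classes. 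Once that is done, the iteration to $p\ge 5$ and the symmetrization of indices are routine.
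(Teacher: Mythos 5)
Your proposal is correct and follows essentially the same route as the paper: eliminate inverses and powers via Cayley--Hamilton, reorder and shorten words with the fundamental $\SL_2$ trace identity (Proposition \ref{prop:2}(v)), and resolve the length-$4$ step by combining instances of that identity across cyclic classes. The ``small linear system'' you defer is solved explicitly in Theorem \ref{t:p:four-mat}, where three applications of the identity close up by cyclicity to give $2\,t_{ABCD}$ equal to length-$\le 3$ data, and the iteration to $p\ge 5$ by absorbing the tail $A_{i_4}\cdots A_{i_p}$ into a single $\SL_2$ element is Corollary \ref{cor:thm:3}.
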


For the situation of $k=3$, we can obtain an explicit equation. By Theorem \ref{thm:3-intro}, 
the coordinates of $\frX_3=\frX(F_3,\SL_2)$ are
$(x,y,z,u,v,w,P)= (t_A, t_B, t_C, t_{BC}, t_{AC}, t_{AB}, t_{ABC})$, where
$(A,B,C)\in \frX_3$. We have the following:

\begin{theorem} \label{thm:P2-intro} 
The character variety $\frX_3  \subset \bk^7$ is a hypersurface defined by the equation 
$P^2 =  (w z+ v y+ u x- x y z)P -x^2 -y^2 -z^2+u y z +v x z +w x y- u v w -u^2-v^2-w^2+4$.
\end{theorem}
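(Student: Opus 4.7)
The plan is to invoke Theorem~\ref{thm:3-intro}, which already identifies $\frX_3$ with its image in $\bk^7$ under the seven traces $(x,y,z,u,v,w,P)$; what remains is to pin down the single defining relation. My strategy is to introduce the companion function $P' := t_{ACB}$ and show that $P$ and $P'$ are the two roots of an explicit quadratic over $\bk[x,y,z,u,v,w]$. The equation in the theorem is then just the rearrangement of $P^{2} - (P+P')P + PP' = 0$.

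To compute $P+P'$, I would use the Cayley--Hamilton-derived $\SL_2$ identity
\[
 BC + CB \;=\; (u-yz)\,I + yC + zB,
\]
multiply on the left by $A$, and take traces; this yields $P+P' = ux+vy+wz-xyz$ at once. To compute $PP'$, I would apply the fundamental identity $t_{X}t_{Y} = t_{XY} + t_{XY^{-1}}$ with $X=ABC$ and $Y=ACB$. Cycling the outer $A^{-1}$ to the front of the second summand reduces the calculation to
\[
 PP' \;=\; t_{ABCACB} \;+\; t_{[B,C]},
\]
where the commutator trace is the classical Fricke identity for two generators, $t_{[B,C]} = y^{2}+z^{2}+u^{2}-yzu-2$. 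The first summand is handled by writing $ABCACB = AB \cdot CAC \cdot B$ and substituting $CAC = A + vC - xI$ (Cayley--Hamilton on $CA$); this splits the trace into three pieces which collapse in turn via $(AB)^{2} = wAB-I$, $ABCB = uAB + AC - zA$ (obtained from $BCB = uB+C-zI$), and $AB^{2} = yAB - A$. Collecting the resulting constants should reproduce precisely the constant term in the stated equation.

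Finally, to see that this is the \emph{defining} relation and not merely \emph{a} relation, I would argue by dimension and irreducibility. $\frX_3 = \SL_2^{3}\sslash\SL_2$ is irreducible of dimension $9-3=6$, while the zero locus $H$ of the quadratic is a hypersurface in $\bk^{7}$ of pure dimension $6$, so the inclusion $\frX_3 \hookrightarrow H$ is an equality once $H$ is shown to be irreducible. Irreducibility of $H$ is equivalent to the discriminant of the quadratic in $P$ not being a square in $\bk[x,y,z,u,v,w]$; a quick parity argument (the specialization at $x=y=z=0$ is the odd-degree polynomial $-4(u^{2}+v^{2}+w^{2}+uvw-4)$, which cannot be a square) suffices. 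Equivalently, a generic triple $(A,B,C)$ and its swap $(A,C,B)$ share the same six singleton and pairwise traces but give $P\neq P'$, ruling out any polynomial dependence of $P$ on the other six.

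The principal obstacle I anticipate is bookkeeping: the reduction of $t_{ABCACB}$ is a chain of four or five Cayley--Hamilton substitutions producing many $\pm I$ and mixed $A$, $AB$, $AC$ terms, where a single sign error would destroy the symmetric shape of the final polynomial.
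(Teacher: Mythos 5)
Your proposal is correct, and it takes a genuinely different route from the paper. The paper proves the identity by brute force: it writes $t_{ABCABC}=P^2-2$ via Proposition~\ref{prop:2}$(iv)$ and then expands $t_{ABCABC}$ through a long cascade of applications of Proposition~\ref{prop:2}$(v)$ and $(vi)$ until the right-hand side emerges. You instead exploit the symmetric-function structure: $P=t_{ABC}$ and $P'=t_{ACB}$ are the two roots of $T^2-XT-Y$, with $P+P'=X$ coming from the anticommutator identity $BC+CB=(u-yz)I+yC+zB$ (equivalently Proposition~\ref{prop:2}$(viii)$) and $PP'=t_{ABCACB}+t_{[B,C]}=-Y$ coming from $t_Xt_Y=t_{XY}+t_{XY^{-1}}$ plus the Fricke identity of Lemma~\ref{lem:t[A,B]}. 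I checked your reduction of $t_{ABCACB}$: with $CAC=A+vC-xI$ and $BCB=C+uB-zI$ one gets $t_{ABCACB}=(w^2-2)+v(uw+v-xz)-x(yw-x)$, and adding $y^2+z^2+u^2-uyz-2$ does give exactly $-Y$, so the bookkeeping you worried about is in fact far lighter than in the paper's expansion. Your approach buys two things the paper's does not make explicit: it explains \emph{why} the relation is quadratic in $P$ (the transposition $B\leftrightarrow C$ fixes the six lower traces and swaps $P$ with its conjugate $P'$, matching the double-cover picture of Corollary~\ref{cor:k=3}), and it supplies a genuine argument that the relation is \emph{defining} — the paper only asserts that a codimension-one image must be a hypersurface, whereas you verify irreducibility of $V(P^2-XP-Y)$ by noting the discriminant $X^2+4Y$ specializes at $x=y=z=0$ to a nonzero odd-degree polynomial, hence is not a square, and then conclude by comparing dimensions of irreducible varieties. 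Both halves of that final step are sound since $\frX_3$ is a closed six-dimensional subvariety of $\bk^7$ contained in the irreducible six-dimensional hypersurface.
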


Next, we look at the case of the character varieties of a compact orientable surface $\S_g$ of genus $g\geq 1$.
Its fundamental group is 
 $$
 \pi_1(\S_g )=\la a_1,b_1,\ldots, a_g, b_g | \, \prod_{i=1}^g [a_i,b_i]=1\ra.
 $$
Take a conjugacy class $[\xi]$ determined by an element $\xi\in \SL_2$, then we define as in \cite{LMN}
 \begin{equation}\label{eqn:Mxi}
  \begin{aligned}
  \cM_\xi &= \{(A_{1},B_{1},\ldots,A_{g},B_{g}) \in (\SL_2)^{2g} \,| \prod_{i=1}^{g}[A_{i},B_{i}]=\xi\} \sslash \stab(\xi) \\
 &= \{(A_{1},B_{1},\ldots,A_{g},B_{g}) \in (\SL_2)^{2g} \,| \prod_{i=1}^{g}[A_{i},B_{i}] \in [\xi]\} \sslash \SL_2 \, .
 \end{aligned}
 \end{equation}
 
There are five different types of conjugacy classes, namely $[I], [-I], [J_+],  [J_-]$ and $[\xi_t]$, where 
$J_\pm =\left( \begin{array}{cc} \pm1 & 0\\ 1 & \pm1\end{array}\right)$ are the Jordan types, and 
$\xi_t=\left( \begin{array}{cc} \lambda & 0\\ 0 & \lambda^{-1} \end{array}\right)$, $t=\lambda+\lambda^{-1}$, $\lambda\in \CC-\{0,\pm 1\}$,
are the diagonal types.
For $\bk=\CC$, these varieties have been studied in \cite{LMN} and the $E$-polynomials are computed in a series of papers
\cite{LMN, MMp,MM}. For $g=1$, we have the following result from \cite[Theorem 1.1]{LMN}.

\begin{theorem}\label{thm11-intro}
  For the $2$-torus $\S_1=T^2$, the $E$-polynomials of $\cM_\xi$ are as follows:
  \begin{eqnarray*}
    e(\cM_{I}) &=& q^2+1\,, \\
    e(\cM_{-I}) &=& 1\, ,\\
    e(\cM_{J_+}) &=&q^2-2q-3 \, , \\
    e(\cM_{J_-}) &=& q^2 +3q\, ,\\
    e(\cM_{\xi_t}) &=& q^2+4q+1\, ,
  \end{eqnarray*}
where $q=\, uv$, $e(\cM_\xi)\in \ZZ[u,v]$.
\end{theorem}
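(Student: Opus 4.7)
The plan is to exploit the classical Fricke trace identity
$$\tr([A,B]) = x^2 + y^2 + w^2 - xyw - 2, \qquad x := t_A,\; y := t_B,\; w := t_{AB},$$
which is one of the fundamental $\SL_2$-trace identities underlying the proof of Theorem \ref{thm:P2-intro}. For each conjugacy class $[\xi]$, this yields the necessary condition $x^2+y^2+w^2-xyw = \tr(\xi)+2$ in $\bk^3$, and I would compute $e(\cM_\xi)$ by stratifying the representation variety and invoking additivity of the $E$-polynomial on locally closed pieces.

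The key stratification is by reducibility of the pair $(A,B)$. If $A,B$ share an eigenline, they are simultaneously upper triangular, and a direct computation shows $[A,B]$ is unipotent and hence satisfies $\tr([A,B])=2$; reducible pairs therefore contribute only to $\cM_I$ and $\cM_{J_+}$, while on the irreducible locus the $\SL_2$-orbits are closed and the GIT quotient is geometric. I would then handle the five cases. For $\cM_{-I}$, the relation $AB=-BA$ forces, after conjugation, a unique Pauli-type normal form with stabilizer $\{\pm I\}$, so $\cM_{-I}$ is a reduced point and $e=1$. For $\cM_{\xi_t}$ with $t\ne\pm 2$, the affine cubic $S_t=\{x^2+y^2+w^2-xyw = t+2\}$ is smooth, and projecting onto $(x,y)$ realises $S_t$ as a double cover of $\bk^2$ ramified along a discriminant curve; additivity of the $E$-polynomial then gives $e=q^2+4q+1$. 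For $\cM_I$, every commuting pair semisimplifies to the maximal torus, identifying $\cM_I \iso (\bk^*)^2/W$ via $(\l,\m)\mapsto(\l+\l^{-1},\m+\m^{-1},\l\m+\l^{-1}\m^{-1})$; computing the $W$-invariant part of the mixed Hodge structure on $H^*_c((\bk^*)^2)$ (where the generator of $H^1_c(\bk^*)$ transforms by the sign character while $H^2_c(\bk^*)$ is $W$-invariant) yields $e=1+q^2$. Finally, for $\cM_{J_\pm}$ I would parametrise the fibre $\{(A,B):[A,B]=J_\pm\}$ explicitly and quotient by $\stab(J_\pm)$, separating the $[J_\pm]$-stratum from the $[\pm I]$-stratum inside $\{\tr([A,B])=\pm 2\}$ by the rank of $[A,B]\mp I$.

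The main obstacle is the non-semisimple case $\xi = J_\pm$: the orbit $[J_\pm]$ is not closed, the stabilizer $\stab(J_\pm)\iso \bk^*\ltimes\bk$ is non-reductive, and distinguishing the $[J_\pm]$-stratum from the $[\pm I]$-stratum inside the trace $\pm 2$ locus demands an open--closed decomposition that interacts subtly with $E$-polynomial additivity and with the non-reductive stabilizer action on the fibres. This bookkeeping, rather than any conceptual difficulty, is where essentially all the computational work of the proof resides.
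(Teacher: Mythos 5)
There are two genuine gaps, and they sit exactly where the content of the theorem lies. (Note also that the paper itself states this theorem as a quotation from [LMN]; what it actually proves is Theorem \ref{thm11bis}, which recovers most but not all of these values, so the comparison below is with that proof.)

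First, your computation of $e(\cM_{\xi_t})$ does not work as stated. Writing the Fricke surface as $w^{2}-xyw+(x^{2}+y^{2}-2-t)=0$ and projecting to $(x,y)$ does exhibit it as a $2{:}1$ cover of $\CC^{2}$ branched over the discriminant curve $D=\{x^{2}y^{2}-4(x^{2}+y^{2}-2-t)=0\}$, but additivity only gives $e(S_t)=e(S_t|_{\CC^{2}\setminus D})+e(D)$, and over the complement of $D$ the map is a nontrivial \'etale double cover, for which $e$ is \emph{not} twice the $E$-polynomial of the base (compare $\CC^{*}\to\CC^{*}$, $z\mapsto z^{2}$). Counting fibres na\"{\i}vely gives $2q^{2}-e(D)$, whose leading term $2q^{2}$ already contradicts the answer $q^{2}+4q+1$ since $D$ is a curve; the correct computation needs the monodromy of the cover (the anti-invariant part of its cohomology), which is precisely the hard input in [LMN] and is absent from your outline. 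The paper avoids this entirely: it passes to the projective closure $V(\hat F_t)\subset\PP^{3}$, a cubic surface that is smooth for $t\neq\pm2$ (so $e=q^{2}+7q+1$ by the known Hodge numbers), checks smoothness along the plane at infinity, subtracts the triangle of three lines there ($e=3q$), and treats $t=\pm2$ as nodal degenerations in which each ordinary double point drops $b_{2}$ by one.

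Second, the cases $\cM_{J_\pm}$ are entirely deferred in your proposal, and for $J_{+}$ your plan of separating strata inside $\{\tr([A,B])=2\}$ and invoking additivity cannot succeed in the trace coordinates: the paper shows, by the explicit conjugation degeneration sending $[A,B]=J_{+}$ to commuting pairs, that $\cM_{J_{+}}\subset\cM_{I}$ inside $\frX_{2}\subset\CC^{3}$, so $\cM_{J_{+}}$ is not a locally closed piece of that model and $q^{2}-2q-3$ must be computed on the representation variety with its non-reductive stabilizer, as in [LMN] (the paper does not re-derive this value; it cites it). For $J_{-}$ the paper instead computes $e(\frX_{-2})=q^{2}+3q+1$ from the one-node cubic, proves $\frX_{-2}=\cM_{-I}\sqcup\cM_{J_{-}}$ with $\cM_{-I}$ a single point, and subtracts. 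On the positive side, your identifications $e(\cM_{-I})=1$ and $\cM_{I}\cong(\CC^{*})^{2}/W$ with $e(\cM_{I})=q^{2}+1$ (via the sign action on $H^{1}_{c}(\CC^{*})$) are correct, and the latter is a genuinely more elementary route than the paper's degeneration argument for $\frX_{2}$.
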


We look at the character varieties $\cM_\xi$ more closely by working out trace identities of commutators of
two matrices. First, for matrices $(A,B)$, we determine the equation for $(x,y,z)=(t_A,t_B,t_{AB})$,
 $$
  F(x,y,z)=t_{[A,B]}= x^2 +y^2+z^2 - xyz -2.
$$
which produces the character varieties:
 $$
 \frX_t= F^{-1}(t)=\{ (A,B)\in (\SL_2)^2 | \, \tr([A,B])=t\}\sslash \SL_2,
 $$
for $t\in \CC$. Then
 \begin{itemize}
 \item $\frX_t= \cM_{\xi_t}$, for $t\neq \pm 2$,
 \item $\frX_2=\cM_{I}\cup \cM_{J_+}$,
 \item $\frX_{-2}=\cM_{-I}\cup \cM_{J_-}$.
 \end{itemize}
   
We study the geometry of the character varieties $\frX_t$, and recover the results of Theorem \ref{thm11-intro}.
More specifically,

\begin{theorem} \label{thm11bis-intro}
Let $t\in\CC$. We have the following:
\begin{itemize}
 \item For $t\neq \pm2$,  the character variety $\frX_t\subset \CC^3$ is a smooth surface,
 and $e(\frX_t)=q^2+4q+1$.
 \item For $t=2$, the character variety $\frX_2\subset \CC^3$ has $4$ ordinary double points. We have $\cM_{J_+}\subset \cM_I$, 
 $\frX_2=\cM_{I}$, and $e(\frX_2)=q^2+1$.
 \item For $t=-2$, the character variety $\frX_{-2}\subset \CC^3$ has only one singular point which is an ordinary double 
 point. We have $\frX_{-2}=\cM_{-I}\sqcup \cM_{J_-}$, and $e(\frX_{-2})=q^2+3q+1$.
  \end{itemize}
 \end{theorem}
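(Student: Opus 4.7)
The plan is to analyze $\frX_t$ as the affine cubic hypersurface $\{F=t\}\subset\CC^3$ and to exploit its projective closure in $\PP^3$ for the computation of $e(\frX_t)$. First I locate the singular locus. The gradient $\nabla F=(2x-yz,\,2y-xz,\,2z-xy)$ vanishes either at $(0,0,0)$ (where $F=-2$) or at points with $xyz\neq 0$; in the latter case multiplying the three equations gives $(xyz)^2=8\,xyz$, so $xyz=8$, and combined with $2x=yz$ this forces $x^2=y^2=z^2=4$. The sign constraint then singles out the four points $(\pm 2,\pm 2,\pm 2)$ with an even number of minus signs, all lying on $\frX_2$. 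At every critical point the Hessian of $F$ is nondegenerate (determinant $-32$ at $(2,2,2)$ and $8$ at the origin), so each singularity is an ordinary double point.

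For $e(\frX_t)$ I homogenize to the cubic surface $\bar\frX_t\subset\PP^3$ cut out by $W(X^2+Y^2+Z^2)-XYZ=(t+2)W^3$. Its trace on the plane at infinity $\{W=0\}$ is the union of the three coordinate lines $\{XYZ=0\}$, with $E$-polynomial $3q$ independent of $t$, and an easy check of $\nabla F$ in homogeneous coordinates shows no singular points lie on $\{W=0\}$. For $t\neq\pm 2$, $\bar\frX_t$ is a smooth cubic surface with $e(\bar\frX_t)=1+7q+q^2$, so $e(\frX_t)=1+4q+q^2$. In the nodal cases $t=\pm 2$, the minimal resolution $\tilde\frX_t\to\bar\frX_t$ replaces each ODP by an exceptional $\PP^1$; since $\tilde\frX_t$ is still a smooth projective rational surface with the same Euler characteristic $9$ as a smooth cubic (hence $h^{1,1}=7$, $h^{2,0}=0$), we have $e(\tilde\frX_t)=1+7q+q^2$, and therefore $e(\bar\frX_t)=e(\tilde\frX_t)-nq=1+(7-n)q+q^2$ where $n$ is the number of nodes. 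Subtracting the boundary contribution $3q$ gives $e(\frX_2)=1+q^2$ and $e(\frX_{-2})=1+3q+q^2$.

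For the stratifications I use the trace parametrization. For $t=2$, commuting pairs after simultaneous diagonalization give $\cM_I\cong(\CC^*)^2/\ZZ_2$, with Weyl action $(a,b)\leftrightarrow(a^{-1},b^{-1})$. The map $\cM_I\to\frX_2$ sending $(a,b)$ to $(a+a^{-1},\,b+b^{-1},\,ab+(ab)^{-1})$ is surjective: given $(x,y,z)\in\frX_2$, choose $a,b$ from $a^2-xa+1=0$ and $b^2-yb+1=0$; the two possible values of $ab+(ab)^{-1}$ are the roots of $T^2-xyT+(x^2+y^2-4)=0$, and $(x,y,z)$ lies on $\frX_2$ precisely when $z$ is one of them. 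Hence $\cM_I=\frX_2$, and any pair with $[A,B]\in[J_+]$, having trace triple in $\frX_2$, is realized also by a commuting pair and so identified with a point of $\cM_I$ in the GIT quotient, giving $\cM_{J_+}\subseteq\cM_I$. For $t=-2$: $[A,B]=-I$ forces $AB=-BA$, whence $\tr A=\tr B=\tr AB=0$, so $\cM_{-I}$ consists of the single point $(0,0,0)\in\frX_{-2}$---precisely the unique ODP---and the complement is the image of $\cM_{J_-}$, yielding $\frX_{-2}=\cM_{-I}\sqcup\cM_{J_-}$. The subtlest ingredient is the $E$-polynomial computation in the nodal cases: one must justify that the minimal resolutions are rational surfaces of the correct Hodge-Tate type (so that $e(\tilde\frX_t)=1+7q+q^2$) and correctly account for the $-q$ contribution per node.
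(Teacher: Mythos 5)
Your proposal is correct and, for the core of the argument, follows the same route as the paper: locate the critical points of $F$ via the gradient (getting the origin for $t=-2$ and the four points $(2,2,2),(2,-2,-2),(-2,2,-2),(-2,-2,2)$ for $t=2$), check nondegeneracy of the Hessian, pass to the projective cubic closure in $\PP^3$, verify smoothness along $\{u=0\}$, and subtract the $E$-polynomial $3q$ of the three lines at infinity from $1+7q+q^2$ in the smooth case. Where you genuinely diverge is in two places. First, for the nodal cubics you compute $e(\bar\frX_t)$ via the minimal resolution (a weak del Pezzo of degree $3$, hence $e(\tilde\frX_t)=1+7q+q^2$, and $e(\bar\frX_t)=1+7q+q^2-nq$ by additivity over the exceptional $\PP^1$'s), whereas the paper argues that in the degeneration $t\to\pm2$ each ordinary double point drops $b_2$ by one. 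The two give the same numbers; your version is arguably more self-contained once the weak del Pezzo/Hodge--Tate claim you flag is justified (e.g.\ a weak del Pezzo of degree $3$ is a blow-up of $\PP^2$ at six points, so $h^{2,0}=0$ and $h^{1,1}=7$). Second, your stratification arguments are more direct than the paper's: you prove $\cM_I=\frX_2$ by explicitly solving the quadratic $T^2-xyT+(x^2+y^2-4)=0$ satisfied by $z$ on $\frX_2$ and matching it with $ab+(ab)^{-1}$ for a diagonal commuting pair, and you deduce $\cM_{J_+}\subset\cM_I$ from the fact that $\frX(F_2,\SL_2)$ is parametrized by $(t_A,t_B,t_{AB})$ (Corollary \ref{cor:k=2}); the paper instead exhibits $X_2$ in the closure of $X_0$ by conjugating with $\diag(t,1)$ and letting $t\to0$. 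For $t=-2$ your observation that $[A,B]=-I$ forces $AB=-BA$ and hence $t_A=t_B=t_{AB}=0$ replaces the paper's citation of the explicit representative from \cite{LMN}.

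One point is asserted rather than proved: the disjointness in $\frX_{-2}=\cM_{-I}\sqcup\cM_{J_-}$, i.e.\ that the image of $\cM_{J_-}$ misses $(0,0,0)$. The paper checks this from the explicit matrix forms of $\bar X_3$ in \cite{LMN}. A quick alternative within your framework: a pair with $[A,B]=J_-\neq\pm I$ is irreducible (reducible pairs are simultaneously triangularizable, forcing $\tr[A,B]=2$), so its $\SL_2$-orbit is closed and distinct from the closed orbit of the anticommuting pair whose commutator is $-I$; since points of the GIT quotient correspond to closed orbits, the two classes cannot coincide in $\frX_{-2}$. With that supplied, the proof is complete.
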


 \noindent {\bf Acknowledgements.}
 We thank Sean Lawton for useful comments and references.
 The first author is partially supported by Project MCI (Spain) PID2020-118452GB-I00. The second author is supported by the Spanish Government research project  PGC2018-099599-B-I00. The second author would like to express his gratitude to the University of Málaga,  for giving him the opportunity to carry out a research stay with the first author.

\section{Moduli of representations and character varieties}\label{sec:character}

Let $\G$ be a finitely presented group, and let $G<\GL_r$ be an algebraic group over an algebraic closed
field $\bk$.
A \textit{representation} of $\G$ in $G$ is a homomorphism $\rho: \G\to G$.
Consider a presentation $\G=\la x_1,\ldots, x_k | r_1,\ldots, r_s \ra$. Then $\rho$ is 
determined by the $k$-tuple $(A_1,\ldots, A_k)=(\rho(x_1),\ldots, \rho(x_k))$
subject to the relations $r_j(A_1,\ldots, A_k)=I$, $1\leq j \leq s$. The space
of representations is
 \begin{equation} \label{eqn:rep}
 R(\G,G) = \Hom(\G, G) = \{(A_1,\ldots, A_k) \in G^k \, | \,
 r_j(A_1,\ldots, A_k)=I, \,  1\leq j \leq s \}.
 \end{equation}
Therefore $R(\G,G)$ is an affine algebraic set.

We say that two representations $\rho$ and $\rho'$ are
equivalent if there exists $P\in G$ such that $\rho'(g)=P^{-1} \rho(g) P$,
for every $g\in G$. This corresponds to a change of basis in $\bk^r$.
Note that the action of $G$ descends to an action of the
projective group $\mathrm{P}G<\PGL_r$ on $R(\G,G)$. The moduli space of representations
is the GIT quotient
 $$
 \frM (\G,G) = R(\G,G) \sslash G \, .
 $$
Recall that by definition of GIT quotient for an affine variety, if we write
$R(\G,G)=\Spec \cO$, then $\frM (\G,G)=\Spec \cO\,{}^{G}$.

Suppose from now on that $G=\SL_r$.
A representation $\rho$ is \textit{reducible} if there exists some proper 
subspace $V\subset \bk^r$ such that for  all $g\in G$ we have 
$\rho(g)(V)\subset V$;  otherwise $\rho$ is
\textit{irreducible}. Note that if $\rho$ is reducible, then let  $V\subset \bk^r$
an invariant subspace, and consider a complement $\bk^r =V\oplus W$. 
Let $\rho_1=\rho|_V$ and $\rho_2$ the induced representation on the
quotient space $W=\bk^r/V$. Then we can write $\rho=\begin{pmatrix} \rho_1 & 0\\
f& \rho_2\end{pmatrix}$, where $f: \G \to \Hom(W,V)$. Take $P_t=
\begin{pmatrix} t\, I_V & 0\\ 0& I_W \end{pmatrix}$, where $k=\dim V$.
Then $P_t^{-1}\rho P_t=\begin{pmatrix} \rho_1 & 0\\
t f& \rho_2\end{pmatrix} \to \rho'=\begin{pmatrix} \rho_1 & 0\\
0& \rho_2\end{pmatrix}$, when $t\to 0$. Therefore $\rho$ and $\rho'$
define the same point in the quotient $\frM(\G,G)$. Repeating this, we can
substitute any representation $\rho$ by some $\tilde\rho=\bigoplus \rho_i$,
where all $\rho_i$ are irreducible representations. We call this process 
\emph{semi-simplification}, and $\tilde\rho$ a semisimple
representation; also $\rho$ and $\tilde\rho$ are called
S-equivalent. The space $\frM(\G,G)$ parametrizes semi-simple representations
\cite[Thm.~ 1.28]{LuMa}.

Given a representation $\rho: \G\to G$, we define its
\textit{character} as the map $\chi_\rho: \G\to \bk$,
$\chi_\rho(g)=\tr \rho (g)$. Note that two equivalent
representations $\rho$ and $\rho'$ have the same character.
There is a character map $\chi: R(\G,G)\to \bk^\G$, $\rho\mapsto
\chi_\rho$, whose image
 \begin{equation}\label{e:ch-variety}
  \frX (\G,G)=\chi(R(\G,G))
 \end{equation}
is called the \textit{character variety} of $\G$. Let us give
$\frX(\G,G)$ the structure of an algebraic variety. The traces $\chi_\rho$
span a subring $B\subset A$. Clearly $B\subset A^{G}$. As
$A$ is noetherian, we have that $B$ is a finitely generated $\bk$-algebra. Hence
there exists a collection $g_1,\ldots, g_a$ of elements of
$G$ such that $\chi_\rho$ is determined by $\chi_\rho(g_1),\ldots,
\chi_\rho(g_a)$, for any $\rho$. Such collection gives a map
 $$
  \bar\chi:R(\G,G)\to \bk^a\, , \qquad
  \bar\chi(\rho)=(\chi_\rho(g_1),\ldots, \chi_\rho(g_a))\, ,
 $$
and $\frX(\G,G)\cong \bar\chi(R(\G,G))$. This endows $\frX(\G,G)$ with
the structure of an algebraic variety, which is independent of the chosen collection.
The natural algebraic map  
 $$
 \frM(\G,G)\to \frX(\G,G)
 $$ 
is an  isomorphism (see Chapter 1 in \cite{LM}). This is the same as to say that 
$B=A^G$, that is, the ring of invariant polynomials is generated by characters.

\subsection{Hodge structures and $E$-polynomials} \label{subsec:e-poly}

Later we will need the notion of $E$-polynomial, which is an invariant of a complex algebraic variety
constructed as an Euler characteristic of its Hodge numbers. We introduce the basic definitions. Here the ground field is 
$\bk=\CC$. A pure Hodge structure of weight $k$ consists of a finite dimensional complex vector space
$H$ with a real structure, and a decomposition $H=\bigoplus_{k=p+q} H^{p,q}$
such that $H^{q,p}=\overline{H^{p,q}}$, the bar meaning complex conjugation on $H$.
A Hodge structure of weight $k$ gives rise to the so-called Hodge filtration, which is a descending filtration
$F^{p}=\bigoplus_{s\ge p}H^{s,k-s}$. We define $\Gr^{p}_{F}(H):=F^{p}/ F^{p+1}=H^{p,k-p}$.

A mixed Hodge structure consists of a finite dimensional complex vector space $H$ with a real structure,
an ascending (weight) filtration $\cdots \subset W_{k-1}\subset W_k \subset \cdots \subset H$
(defined over $\RR$) and a descending (Hodge) filtration $F$ such that $F$ induces a pure Hodge structure of weight $k$ on each $\Gr^{W}_{k}(H)=W_{k}/W_{k-1}$. We define $H^{p,q}:= \Gr^{p}_{F}\Gr^{W}_{p+q}(H)$ and write $h^{p,q}$ for the {\em Hodge number} $h^{p,q} :=\dim H^{p,q}$.

Let $Z$ be any quasi-projective algebraic variety (possibly non-smooth or non-compact). 
The cohomology groups $H^k(Z)$ and the cohomology groups with compact support  
$H^k_c(Z)$ are endowed with mixed Hodge structures \cite{De}. 
We define the {\em Hodge numbers} of $Z$ by
$h^{k,p,q}_{c}(Z)= h^{p,q}(H_{c}^k(Z))=\dim \Gr^{p}_{F}\Gr^{W}_{p+q}H^{k}_{c}(Z)$ .
The $E$-polynomial is defined as 
 $$
 e(Z):=\sum _{p,q,k} (-1)^{k}h^{k,p,q}_{c}(Z) u^{p}v^{q}.
 $$

The key property of Hodge-Deligne polynomials that permits their calculation is that they are additive for
stratifications of $Z$. If $Z$ is a complex algebraic variety and
$Z=\bigsqcup_{i=1}^{n}Z_{i}$, where all $Z_i$ are locally closed in $Z$, then $e(Z)=\sum_{i=1}^{n}e(Z_{i})$.
Also $e(X\x Y)=e(X)e(Y)$.

When $h_c^{k,p,q}=0$ for $p\neq q$, the polynomial $e(Z)$ depends only on the product $uv$.
This will happen in all the cases that we shall investigate here. In this situation, it is
conventional to use the variable $q=uv$. 
Basic cases are $e(\CC)=q$, $e(\CC^r)=q^r$, $e(\PP^r)=q^r+\ldots+ q^2 + q+1$.
%

\section{The character variety for free groups}\label{sec:free}

Now we focus on the case of a free group. Let  $\Gamma=F_k:=\la x_1,x_2,\ldots, x_k\ra$ 
be the free group generated by $k$ elements.
Then, the space of representations of $F_k$ in $\SL_r$ is just
 $$
\Hom (F_k,\SL_r) = (\SL_r)^k = \{(A_1,A_2,\ldots, A_k) | \, A_i\in \SL_r \},
 $$
the space of $k$-tuples of matrices in $\SL_r$. The moduli space of $k$-tuples of matrices up
to conjugation is 
 $$
 \frM(F_k, \SL_r) = (\SL_r)^k \sslash \SL_r\, .
 $$
 
 As we said in Section \ref{sec:character}, this is isomorphic to the character variety $\frX(F_k,\SL_r)$.
 This implies that there are finitely many $g_1,\ldots, g_a \in F_k$ such that 
 a character $\chi_\rho \in \frX(F_k,\SL_r)$ is determined by $\chi_\rho(g_1),\ldots,
\chi_\rho(g_a)$. Denote $\rho(x_i)=A_i\in \SL_r$, and also 
 $$
 \tr(A)=t_A
 $$ 
for the trace of a matrix. For an element $g_j= x_{i_{j1}}\ldots x_{i_{j\ell_j}}\in F_k$, we have
 $$
  \chi_\rho(g)= \tr \rho(g) = \tr (\rho( x_{i_{j1}}) \cdots \rho(x_{i_{j\ell_j}}) )
 = \tr (A_{i_{j1}} \cdots A_{i_{j\ell_j}})  = t_{A_{i_{j1}} \cdots A_{i_{j\ell_j}}}\, .
$$
This implies that $\frX(F_k,\SL_r)$ is parametrized by the above traces for $j=1,\ldots, a$, that is
\begin{eqnarray}\label{eqn:char-Fk}
  \frM(F_k, \SL_r) & \longrightarrow & \frX(F_k,\SL_r) \subset \bk^a  \nonumber \\
 (A_1,\ldots, A_k) &\mapsto & \big( \, t_{A_{i_{11}} \cdots A_{i_{1\ell_1}}}\, , \, \ldots\, ,\, t_{A_{i_{a1}} \cdots A_{i_{a\ell_a}}}\, \big)
 \end{eqnarray}
is a parametrization of the character variety.

\begin{proposition} \label{prop:0}
If $k\geq 2$, the dimension of the character variety $\frX(F_k,\SL_r)$  is 
 $$
 \dim \frX(F_k,\SL_r)=(r^2-1)(k-1). 
 $$ 
If $k=1$ then 
 $$
 \dim \frX(F_1,\SL_r)=(r-1).
 $$
\end{proposition}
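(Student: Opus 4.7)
The plan is to compute both dimensions by the orbit-stabilizer formula applied to the simultaneous conjugation action of $\SL_r$ on $R(F_k,\SL_r)=(\SL_r)^k$, using that the character variety $\frX(F_k,\SL_r)$ is identified with the GIT quotient $\frM(F_k,\SL_r)=(\SL_r)^k\sslash \SL_r$ (as recalled in Section~\ref{sec:character}).

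For $k=1$, the space $\SL_r\sslash \SL_r$ parametrizes conjugacy classes of single matrices $A\in \SL_r$. The semisimplification of any $A$ is diagonalizable, so each semisimple class is determined by the unordered eigenvalues $(\lambda_1,\dots,\lambda_r)$ subject to $\prod \lambda_i=1$, equivalently by the $r-1$ coefficients $c_1,\dots,c_{r-1}$ of its characteristic polynomial $\lambda^r-c_1\lambda^{r-1}+\cdots\pm 1$. These coefficients are algebraically independent traces $t_A,t_{A^2},\dots,t_{A^{r-1}}$, so $\frX(F_1,\SL_r)\cong \AA^{r-1}$ and $\dim \frX(F_1,\SL_r)=r-1$.

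For $k\geq 2$, I would first produce an irreducible representation. Take $A_1=\diag(\lambda_1,\dots,\lambda_r)$ with distinct $\lambda_i\in\bk^*$ satisfying $\prod\lambda_i=1$, and $A_2$ a cyclic permutation matrix (rescaled to have determinant $1$). Then $A_1$ forces any invariant subspace to be a sum of coordinate lines, and $A_2$ cyclically permutes those lines, so no proper invariant subspace exists; hence the pair is irreducible. In particular, the locus $R^{\mathrm{irr}}\subset (\SL_r)^k$ of irreducible representations is a nonempty Zariski-open, hence dense, subset of the irreducible variety $(\SL_r)^k$. By Schur's lemma the stabilizer of any $\rho\in R^{\mathrm{irr}}$ under the conjugation action is the center $Z(\SL_r)$, which is the finite group $\mu_r$. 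Consequently each such orbit has dimension $\dim \SL_r=r^2-1$.

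Since the quotient map $(\SL_r)^k\to \frM(F_k,\SL_r)$ is dominant and the generic fibre over the image of $R^{\mathrm{irr}}$ is (up to the finite stabilizer) a single $\SL_r$-orbit of dimension $r^2-1$, we obtain
\[
\dim\frX(F_k,\SL_r)=\dim \frM(F_k,\SL_r)=\dim (\SL_r)^k-(r^2-1)=k(r^2-1)-(r^2-1)=(k-1)(r^2-1).
\]
The only nontrivial step is the existence of irreducible representations, which I expect to be the main (small) obstacle; the explicit pair $(A_1,A_2)$ above handles it, and for $k>2$ one simply pads with identity matrices or generic elements, which cannot decrease irreducibility.
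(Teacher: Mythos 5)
Your proof is correct and follows essentially the same route as the paper: identify $\frX(F_k,\SL_r)$ with $(\SL_r)^k\sslash\SL_r$ and compute the dimension from the fact that irreducible representations form a dense open set on which the conjugation action has finite (central) stabilizer, so generic orbits have dimension $\dim\SL_r$. The only differences are that you supply the details the paper leaves implicit or cites elsewhere, namely an explicit irreducible pair $(A_1,A_2)$ showing the irreducible locus is nonempty, and a direct argument via characteristic polynomial coefficients for $\frX(F_1,\SL_r)\cong\AA^{r-1}$, which the paper attributes to \cite{MP}.
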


\begin{proof}
 Let us assume that $k\geq 2$. The action of $\SL_r$ on irreducible representations has finite stabilizer, so the action 
has generic orbits of dimension $\dim \SL_r$. This means that
 $$
\dim \frX(F_k,\SL_r)= \dim\,  (\SL_r)^k - \dim \SL_r= (k-1) \dim \SL_r =(r^2-1)(k-1).
$$
On the other hand, if $k=1$, the character variety  
$\frX(F_1,\SL_r)= \SL_r \sslash \SL_r$ is canonically isomorphic to $\bk^{r-1}$, 
as it is proved in \cite{MP}, so it has dimension $r-1$.
\end{proof}

From now on we focus on the case of rank $2$, that is, the group $\SL_2$.
We want to determine how many traces are needed in (\ref{eqn:char-Fk}).
First, we demonstrate some useful matrix identities.

\begin{lemma} \label{lem:1}
Let $P,Q\in \SL_2$. Then the following holds
\begin{equation}\label{e:qp}
QP = (t_{PQ}-t_Pt_Q) I + t_P Q+t_Q P - PQ.
\end{equation}
\end{lemma}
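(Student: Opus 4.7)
The plan is to exploit the Cayley--Hamilton theorem in the very clean form it takes for $\SL_2$. Since any $A\in\SL_2$ satisfies $A^2-t_A A+I=0$ (its characteristic polynomial), we get the fundamental identity
\begin{equation*}
A+A^{-1}=t_A\,I,\qquad\text{equivalently}\qquad A^{-1}=t_A\,I-A.
\end{equation*}
This turns inverses of $\SL_2$-matrices into affine expressions in the matrix itself, and this is the only non-trivial input I expect to need.

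First I would apply the identity separately to $P$ and $Q$, obtaining $P^{-1}=t_P I-P$ and $Q^{-1}=t_Q I-Q$. Multiplying these two linear expressions gives
\begin{equation*}
Q^{-1}P^{-1}=(t_Q I-Q)(t_P I-P)=t_Pt_Q\,I-t_Q P-t_P Q+QP.
\end{equation*}
Next I would apply the same identity to the product $PQ\in\SL_2$, which yields $(PQ)^{-1}=t_{PQ}\,I-PQ$. Since $(PQ)^{-1}=Q^{-1}P^{-1}$, equating the two expressions for $Q^{-1}P^{-1}$ and solving for $QP$ gives exactly
\begin{equation*}
QP=(t_{PQ}-t_Pt_Q)\,I+t_P Q+t_Q P-PQ,
\end{equation*}
which is (\ref{e:qp}).

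There is no real obstacle here: the whole proof is the observation that in $\SL_2$ the inverse of any matrix is a linear combination of the matrix and the identity, so that $(PQ)^{-1}$ can be computed in two ways and the two expressions compared. The only mild care is to expand the product $Q^{-1}P^{-1}$ in the correct order so that $QP$ (not $PQ$) appears on the right-hand side; this is why the formula is asymmetric in $P$ and $Q$.
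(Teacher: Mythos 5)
Your proof is correct and is essentially identical to the paper's own argument: both derive $A^{-1}=t_A I-A$ from the Cayley--Hamilton relation, compute $(PQ)^{-1}=Q^{-1}P^{-1}$ in two ways, and solve for $QP$. No further comment is needed.
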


\begin{proof}
First of all, let us recall that  $t_{A^{-1}}=t_A$ for every $A\in \SL_2$. On the other hand, 
the relation $t_{AB}=t_{BA}$ holds for every pair of square matrices $A,B$ of the same size. 
Given $A\in \SL_2$, the following relation is given by the characteristic polynomial of $A$,
\begin{equation}\label{e:square}
 A^2 =t_A A -I
\end{equation}
and therefore, the following holds
 \begin{equation}\label{e:inverse}
 A^{-1}= t_A I - A
\end{equation}
 By \eqref{e:inverse}, we can write $(PQ)^{-1}$ as
 $$
 Q^{-1}P^{-1}=(PQ)^{-1} = t_{PQ} I -PQ .
 $$
Applying \eqref{e:inverse} to $Q^{-1}$ and $P^{-1}$ on the above equation we obtain the following
 $$
(t_Q I - Q)(t_P I -P) =t_{PQ}I-PQ,
 $$
therefore $QP = (t_{PQ}-t_Pt_Q) I + t_P Q+t_Q P - PQ$, as required.
\end{proof}

\begin{proposition} \label{prop:2} Let $A,B,C,P,Q\in\SL_2$, then the following holds. 
\begin{enumerate}[label=(\roman*)]
\item $t_I=2$
\item $t_{AB}=t_{BA}$
\item $t_{A^2}=t_A^2-2$ 
\item $t_{ABAB}=t_{AB}^2-2$
\item $t_{PBAQ}= t_{PQ} t_{AB}-t_{PQ}t_At_B+ t_A t_{PBQ} +t_B t_{PAQ} - t_{PABQ}$
\item $t_{PA^2Q}= t_A t_{PAQ}- t_{PQ}$
\item  $t_{PA^{-1} Q}= t_A t_{PQ} - t_{PAQ}$
\item $t_{ABC}=t_{A}t_{BC}+t_Bt_{AC} +t_Ct_{AB}-t_{A}t_{B}t_C-t_{ACB}$
\end{enumerate}
 \end{proposition}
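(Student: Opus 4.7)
The plan is to handle the eight identities in essentially the order given, since the later ones depend on the earlier ones, and to rely almost entirely on the two structural identities already established for $\SL_2$: the characteristic polynomial relation $A^2 = t_A A - I$ from \eqref{e:square}, the inverse formula $A^{-1} = t_A I - A$ from \eqref{e:inverse}, and the swap formula \eqref{e:qp} from Lemma \ref{lem:1}.

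Parts (i)--(iv) are essentially free. Identity (i) is just the trace of the $2\times 2$ identity, and (ii) is the general cyclic property of the trace. For (iii), take the trace of \eqref{e:square}, using (i), to obtain $t_{A^2} = t_A \cdot t_A - t_I = t_A^2 - 2$. Identity (iv) is then (iii) applied to the matrix $AB \in \SL_2$, since $t_{(AB)^2} = t_{ABAB}$.

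Parts (vi) and (vii) will follow by the same mechanism: multiply the corresponding scalar identity by $P$ on the left and $Q$ on the right, then take the trace. For (vi), multiplying \eqref{e:square} gives $PA^2Q = t_A \cdot PAQ - PQ$; taking traces yields $t_{PA^2Q} = t_A t_{PAQ} - t_{PQ}$. For (vii), multiplying \eqref{e:inverse} gives $PA^{-1}Q = t_A \cdot PQ - PAQ$, and taking traces finishes it.

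Part (v) is the heart of the proposition and the step that needs a little care. I would apply Lemma \ref{lem:1} with the pair $(A,B)$ to rewrite the block $BA$ as
\[
BA = (t_{AB} - t_A t_B) I + t_A B + t_B A - AB.
\]
Multiplying on the left by $P$ and on the right by $Q$ gives
\[
PBAQ = (t_{AB} - t_A t_B)\, PQ + t_A\, PBQ + t_B\, PAQ - PABQ,
\]
and taking the trace yields exactly the identity in (v). This is the main obstacle only in the sense that one must be careful about the order of substitution into \eqref{e:qp}; no real difficulty arises.

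Finally, (viii) is obtained as a specialization of (v). Setting $P := A$ and $Q := I$ in (v), and renaming the two middle matrices so that the product $PBAQ$ becomes $ACB$ (i.e.\ use matrices $C, B$ in place of $B, A$ respectively), every term on the right acquires a known trace: $t_{PQ} = t_A$, $t_{PBQ}$ becomes $t_{AC}$, $t_{PAQ}$ becomes $t_{AB}$, and $t_{PABQ}$ becomes $t_{ABC}$. Solving the resulting equation for $t_{ABC}$ gives
\[
t_{ABC} = t_A t_{BC} + t_B t_{AC} + t_C t_{AB} - t_A t_B t_C - t_{ACB},
\]
which is (viii). Thus everything reduces to the Cayley--Hamilton relation in $\SL_2$ and the single swap formula of Lemma \ref{lem:1}.
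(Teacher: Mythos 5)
Your proposal is correct and follows essentially the same route as the paper: parts (i)--(iv), (vi), (vii) from the Cayley--Hamilton relations by taking traces, part (v) by sandwiching the swap formula of Lemma \ref{lem:1} between $P$ and $Q$, and part (viii) as a specialization of (v) (you take $P=A$, $Q=I$ where the paper takes $P=I$, $Q=C$, but by cyclicity of the trace these yield the identical identity).
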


\begin{proof}
$(i)$ and $(ii)$ Immediate. 

\noindent $(iii)$ Since $A\in \SL_2$,  the result follows from Equation (\ref{e:square}) taking traces. 

\noindent $(iv)$ It follows from $(iii)$ by just observing that $ABAB=(AB)^2$. 

\noindent $(v)$ Use the formula of Lemma \ref{lem:1} multiplying in the left by $P$ and in the right by  $Q$, to get 
 \begin{equation}\label{e:comm}
 PBAQ = P( (t_{AB}-t_At_B) I + t_AB+t_B A - AB)Q
 \end{equation}
and take traces to obtain the sought formula.

\noindent $(vi)$ Start with Equation (\ref{e:square}), and multiply  in the left by $P$ and in the right by  $Q$, to get
 $$
 P A^2 Q= t_A PAQ - PQ\, .
 $$
Finally take traces we get the required formula.

\noindent $(vii)$ From Equation \eqref{e:inverse}, we get
 $$
PA^{-1} Q= t_A PQ - PAQ,
 $$
and take traces.

\noindent $(viii)$ In $(v)$, take $P=I$, $Q=C$ to get 
$t_{BAC}= t_{C} t_{AB}-t_{C}t_At_B+ t_A t_{BC} +t_B t_{AC} - t_{ABC}$,
which rewrites as the above.
\end{proof}

\begin{theorem}\label{thm:3}
 Let $A_1,\ldots, A_k\in \SL_2$, take a monomial $x= A_{i_1}^{r_1} \ldots A_{i_m}^{r_m}$, with $r_j \in \ZZ$ and $1\leq i_1,\ldots, i_m\leq k$. 
Then $t_x$ has a (polynomial) expression in terms of 
 $$
 T_{i_1\ldots i_p}:= t_{A_{i_1}\ldots A_{i_p}} , \qquad 1\leq i_1<\ldots < i_p \leq k.
 $$
Therefore the ring of functions of $\frX_k=\frX(F_k,\SL_2)=\Spec \cO_{\frX_k}$ is given as 
 $$
 \cO_{\frX_k} = \bk \big[ \{ T_{i_1\ldots i_p}\}_{  1\leq i_1<\ldots < i_p \leq k} \big]/\cI ,
 $$
for some ideal $\cI$ of relations.
\end{theorem}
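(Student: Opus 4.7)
The plan is to proceed by a two-stage induction: first reduce $x$ to a ``positive word'' in which every letter $A_i$ occurs to the first power only, and then sort such a positive word using identity (v) while collapsing adjacent equal letters via identity (vi) of Proposition \ref{prop:2}.

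First I would dispose of powers and inverses. Iterating the characteristic polynomial relation $A^2 = t_A A - I$ together with the inverse formula (\ref{e:inverse}), an easy induction on $|r|$ shows that every power $A_i^r$ with $r \in \ZZ$ can be written as $\alpha_r(T_i)\,I + \beta_r(T_i)\,A_i$ for polynomials $\alpha_r, \beta_r \in \bk[T_i]$. Substituting this into $x = A_{i_1}^{r_1} \cdots A_{i_m}^{r_m}$ and expanding the resulting product gives
\[
 x \;=\; \sum_{S \subseteq \{1,\ldots,m\}} c_S(T_{i_1},\ldots,T_{i_m}) \prod_{j \in S} A_{i_j},
\]
where the last product is taken in the natural order inherited from $S$. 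Taking traces shows that $t_x$ is a polynomial in the $T_{i_j}$'s and in the traces $t_w$ of the positive ``subwords'' $w = A_{i_{j_1}} \cdots A_{i_{j_s}}$, each letter appearing to the first power (the indices $i_{j_a}$ may still repeat).

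For such a positive word $w = A_{j_1} \cdots A_{j_\ell}$ I would then induct on the pair $(\ell,\nu(w))$, where $\nu(w) = \#\{a < b : j_a > j_b\}$ is the number of inversions, ordered lexicographically. If $\ell \le 2$ the claim is immediate from items (i)--(iii) of Proposition \ref{prop:2}. Otherwise three mutually exhaustive sub-cases arise. If $j_a = j_{a+1}$ for some $a$, item (vi) with $A = A_{j_a}$ writes $t_w$ through traces of words of lengths $\ell - 1$ and $\ell - 2$. If $w$ is weakly increasing with no repetition, then $j_1 < j_2 < \cdots < j_\ell$ and $t_w = T_{j_1 \ldots j_\ell}$ is already a generator. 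Otherwise some adjacent inversion $j_a > j_{a+1}$ exists, and item (v) applied with $B = A_{j_a}$, $A = A_{j_{a+1}}$, and $P$, $Q$ the prefix and suffix of $w$ yields
\[
 t_w \;=\; -\,t_{w'} + t_{PQ}\,t_{AB} - t_{PQ}\,t_A\,t_B + t_A\,t_{PBQ} + t_B\,t_{PAQ},
\]
where $w' = PABQ$ has the same length as $w$ but strictly fewer inversions, while $PQ$, $PAQ$, $PBQ$ are all shorter than $w$. The inductive hypothesis closes the step.

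The principal obstacle is that repeated letters need not sit adjacently inside $w$: a single application of (v) never shortens the word, it merely decreases the inversion count. The double induction on $(\ell,\nu(w))$ circumvents this, as sorting via (v) eventually brings any pair of equal letters into adjacency, at which point (vi) finally shortens the word. The statement about $\cO_{\frX_k}$ then follows from the parametrization (\ref{eqn:char-Fk}): every coordinate function on $\frX_k$ is a polynomial in the $T_{i_1\ldots i_p}$, so the natural map $\bk\bigl[\{T_{i_1 \ldots i_p}\}\bigr] \surj \cO_{\frX_k}$ is surjective and $\cI$ is defined to be its kernel.
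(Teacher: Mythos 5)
Your proof is correct and follows essentially the same reduction as the paper: eliminate inverses and higher powers first, then sort the resulting positive word by applying Proposition \ref{prop:2}(v) to adjacent out-of-order pairs while collapsing adjacent squares with (vi). Your double induction on $(\ell,\nu(w))$ makes precise the termination argument that the paper only sketches informally, and your matrix-level expansion of $A_i^{r}$ as $\alpha_r I+\beta_r A_i$ replaces the paper's repeated use of the trace-level identities (vi) and (vii), but these are cosmetic differences rather than a different route.
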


\begin{proof}
If $r_j<0$, we use Proposition \ref{prop:2}$(vii)$ to write $t_x$ in terms of traces of monomials in which $r_j\geq 0$.
Now, if $r_j\geq 2$, we use Proposition \ref{prop:2}$(vi)$ to write $t_x$ in terms of traces of monomials in which $r_j$ is smaller. Repeating
we can reach an expression with $r_j=0,1$. Doing this for all indices, we finally get a polynomial expression in terms of 
$t_{A_{i_1}\ldots A_{i_p}}$,  $1\leq i_1,\ldots, i_p \leq k$, where $i_j \neq i_{j+1}$.
That is, in the monomial two consecutive matrices are distinct.

Now suppose that $i_j > i_{j+1}$. Then we use Proposition \ref{prop:2}$(v)$ to get an expression in which the traces appearing
have either less number of matrices, or $A_{i_j}, A_{i_{j+1}}$ are swapped. In the first case, we can work by induction on the
number of matrices involved to get to the result (note that the other operations do not increase the number of matrices in a
given monomial). In the second case, now we get an expression $PA_{i_{j+1}}A_{i_j}Q$ with  $i_{j+1}<i_j $. If now there are
two consecutive matrices repeated (that is, a square), we use  Proposition \ref{prop:2}$(vi)$ again. Otherwise, we have managed to
reorder two matrices. We can permute the matrices with this process until $i_1$ is the lowest index, so that $i_1< i_2,\ldots, i_p$.
We continue in this fashion until $i_1< i_2<\ldots < i_p$.
\end{proof}

The number of monomials of the form $A_{i_1}\ldots A_{i_{p}}$ with $1\leq i_1<\ldots<i_p\leq k$ described in Theorem \ref{thm:3} is
  \begin{equation*}
 \sum_{p=1}^k \binom{k}{p} = 2^k-1 \, .
\end{equation*}

\begin{corollary} \label{cor:k=2}
 For $k=2$, the character variety $\frX_2=\frX(F_2,\SL_2)$ is isomorphic to $\bk^3$, and it is parametrized by $(t_A,t_B,t_{AB})$,
 for $(A,B)\in \frX_2$.
\end{corollary}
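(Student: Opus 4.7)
The plan is to combine Theorem \ref{thm:3} (which hands us a surjection $\bk[x,y,z]\surj \cO_{\frX_2}$ sending $(x,y,z)\mapsto(t_A,t_B,t_{AB})$) with an explicit surjectivity statement for the trace map $\bar\chi:(\SL_2)^2\to \bk^3$, $(A,B)\mapsto (t_A,t_B,t_{AB})$. Once $\bar\chi$ is surjective, the three generators $t_A, t_B, t_{AB}$ are forced to be algebraically independent: any polynomial relation $f(t_A,t_B,t_{AB})=0$ would, by surjectivity, yield $f(x,y,z)=0$ on all of $\bk^3$, so $f\equiv 0$. This identifies $\cO_{\frX_2}$ with $\bk[x,y,z]$, giving $\frX_2\cong \bk^3$.

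The key step is therefore to realize an arbitrary triple $(x,y,z)\in \bk^3$ as $(t_A,t_B,t_{AB})$ for some pair $(A,B)\in(\SL_2)^2$. I would fix the rational normal form
$$
A=\begin{pmatrix} x & -1 \\ 1 & 0 \end{pmatrix} \in \SL_2,
$$
which automatically satisfies $t_A=x$, and look for $B$ in the specific shape
$$
B=\begin{pmatrix} 0 & b \\ c & y \end{pmatrix},
$$
so that the trace condition $t_B=y$ holds by construction. The determinant constraint becomes $-bc=1$, and a short computation gives $t_{AB}=b-c$. Substituting $c=-1/b$ reduces matters to solving $b+1/b=z$, i.e.\ $b^2-zb+1=0$, which has a root in $\bk$ because $\bk$ is algebraically closed. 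Since any such root is automatically nonzero, this produces the desired pair $(A,B)$, establishing surjectivity of $\bar\chi$.

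Putting the pieces together: the surjection $\bk[x,y,z]\surj \cO_{\frX_2}$ from Theorem \ref{thm:3} must be injective by the algebraic independence argument above, so $\cO_{\frX_2}\cong \bk[x,y,z]$ and $\frX_2\cong \bk^3$ with the claimed coordinates. I do not foresee any real obstacle; the only thing to be careful about is that the pair $(A,B)$ just constructed genuinely lies in $(\SL_2)^2$ (which the determinant relation $-bc=1$ enforces) and that the computation of $t_{AB}$ is done correctly, but these are routine verifications rather than a substantive difficulty.
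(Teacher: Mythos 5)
Your proof is correct, but it takes a genuinely different route from the paper's. The paper deduces the corollary from the dimension count of Proposition \ref{prop:0} ($\dim \frX(F_2,\SL_2)=3$) together with Theorem \ref{thm:3}: a closed $3$-dimensional subvariety of $\bk^3$ with reduced coordinate ring must be all of $\bk^3$. You instead prove surjectivity of $(A,B)\mapsto (t_A,t_B,t_{AB})$ by an explicit normal form, and your computation checks out: with $A=\left(\begin{smallmatrix} x & -1 \\ 1 & 0\end{smallmatrix}\right)$ and $B=\left(\begin{smallmatrix} 0 & b \\ c & y\end{smallmatrix}\right)$, $c=-1/b$, one gets $AB=\left(\begin{smallmatrix} -c & xb-y \\ 0 & b\end{smallmatrix}\right)$, so $t_{AB}=b-c=b+1/b$, and $b^2-zb+1=0$ is solvable with nonzero root since $\bk$ is algebraically closed. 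Surjectivity then kills the ideal $\cI$ of relations, since a polynomial vanishing on all of $\bk^3$ is zero, so $\cO_{\frX_2}\cong \bk[x,y,z]$. Your argument is more self-contained and constructive (it exhibits a concrete representative pair over each point of $\bk^3$ and avoids any appeal to irreducibility or to the stabilizer/dimension argument of Proposition \ref{prop:0}), at the cost of a short matrix computation; the paper's argument is shorter but leans on facts it leaves implicit (closedness of the image and reducedness of the invariant ring). Both are valid.
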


\begin{proof}
 We have by Proposition \ref{prop:0}, that $\dim \frX(F_2,\SL_2)=3$. By Theorem \ref{thm:3}, the traces
$t_A,t_B,t_{AB}$ parametrize. Therefore the result.
\end{proof}

\section{Equation of the character variety $\frX(F_3,\SL_2)$}

Since a general algorithm to compute  the ideal $\mathcal{I}$ described in Theorem  \ref{thm:3} is unknown, let us start by 
looking at the free group generated by three elements $F_3$. The aim of this section is to study the character variety
 $$
 \frX_3=\frX(F_3,\SL_2)=\{(A,B,C) | \, A,B,C \in \SL_2\}  \sslash \SL_2\, .
 $$
 By Proposition \ref{prop:0}, we have that $\dim \frX(F_3,\SL_2)=6$. By Theorem \ref{thm:3}, the traces
 $$
 t_A, t_B, t_C, t_{AB},t_{AC},t_{BC},t_{ABC}
 $$
generate the ring of functions of  $\frX(F_3,\SL_2)$. These are $7$ variables, hence there is an embedding
 $$
 \frX(F_3,\SL_2) \subset \bk^7\, ,
$$
and the character variety is a hypersurface defined by a single equation. To find such equation, we work
as follows. For the sake of clarity, let us set the following variables
\begin{align}\label{e:new-variables}
  & x=t_A, \quad y =t_B,\quad z=t_C \nonumber\\
  & u=t_{BC}, \quad v =t_{AC},\quad w=t_{AB} \\
  & P=t_{ABC} \nonumber
  \end{align}  

Now we complete Theorem \ref{thm:P2-intro}.

\begin{theorem} \label{thm:P2} The character variety $\frX_3 \subset \bk^7$ is a hypersurface defined by the equation 
 $$
 P^2 =  (w z+ v y+ u x- x y z)P -x^2 -y^2 -z^2+u y z +v x z +w x y- u v w -u^2-v^2-w^2+4. 
 $$
\end{theorem}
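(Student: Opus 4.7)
The strategy is to introduce the companion trace $Q := t_{ACB}$ and exhibit $P$ and $Q$ as the two roots of a common monic quadratic $T^2 - sT + p$ with $s, p \in \bk[x,y,z,u,v,w]$; substituting $T=P$ then produces the claimed equation. For the sum, Proposition \ref{prop:2}(viii) applied to the triple $(A, B, C)$ gives immediately
$$s \;=\; P + Q \;=\; t_{ABC} + t_{ACB} \;=\; xu + yv + zw - xyz.$$

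For the product $p = PQ$, the plan is to compute $t_{(ABC)^2}$ in two different ways and equate. On the one hand, Proposition \ref{prop:2}(iv) applied to $ABC$ gives $t_{(ABC)^2} = P^2 - 2$. On the other hand, applying Proposition \ref{prop:2}(v) to $ABCABC = (AB) \cdot C \cdot A \cdot (BC)$ expresses $t_{ABCABC}$ as a combination of shorter auxiliary traces, each of which is reducible by further applications of Proposition \ref{prop:2}(v)--(vi); one expects, for instance, $t_{AB^2C} = yP - v$, $t_{ABCBC} = uP - x$, $t_{ABABC} = wP - z$, while $t_{ABACBC}$ reduces to a polynomial purely in the six base variables. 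Collecting, the coefficient of $P$ in the resulting expression organizes itself into $xu + yv + zw - xyz = P + Q$, so equating the two expressions for $t_{(ABC)^2}$ yields an identity of the form
$$P^2 - 2 \;=\; P(P+Q) + R(x,y,z,u,v,w),$$
from which $PQ = -2 - R$. An explicit expansion gives
$$PQ \;=\; x^2 + y^2 + z^2 + u^2 + v^2 + w^2 + uvw - xyw - xzv - yzu - 4,$$
and rearranging $P^2 = (P+Q)P - PQ$ recovers exactly the formula in the statement.

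The hypersurface claim then follows by dimension. Proposition \ref{prop:0} gives $\dim \frX_3 = 6$, and Theorem \ref{thm:3} realizes $\frX_3$ as a closed irreducible subset of $\bk^7$ via the seven trace coordinates. The derived identity shows $\frX_3 \subseteq V(F)$, where $F$ is quadratic in $P$ with coefficients in $\bk[x,y,z,u,v,w]$; since the discriminant of $F$ in $P$ is a nonzero polynomial, $F$ is irreducible, so $V(F)$ is an irreducible hypersurface of dimension $6$. Containing an irreducible $6$-dimensional subvariety, it must equal $\frX_3$.

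The main obstacle is computational: the Proposition \ref{prop:2}(v) reduction of $t_{ABCABC}$ trades a length-six word for a combination that still contains the length-six word $t_{ABACBC}$, so the recursion is not visibly descending. The key is to apply Proposition \ref{prop:2}(v) to $t_{ABACBC}$ with the decomposition $A \cdot B \cdot A \cdot (CBC)$, which produces an $A^2$ block that collapses by Proposition \ref{prop:2}(vi); the remaining shorter words $t_{ACBC}$, $t_{CBC}$, $t_{(BC)^2}$ are then immediate. Equally important is that every intermediate trace remains linear in $P$, for only then does the coefficient of $P$ in the final identity coincide with $P+Q$ and allow $PQ$ to emerge as a genuine polynomial in $x, y, z, u, v, w$ rather than a rational function. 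These cancellations are forced by the a priori equality $t_{(ABC)^2} = P^2 - 2$, but verifying them is the bulk of the work.
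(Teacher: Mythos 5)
Your proposal follows essentially the same route as the paper's proof: the paper likewise starts from $t_{(ABC)^2}=P^2-2$, expands $t_{ABCABC}$ by Proposition \ref{prop:2}(v) applied to the decomposition $(AB)\cdot C\cdot A\cdot (BC)$, and kills the stubborn length-six word $t_{ABACBC}$ exactly as you predict, via $A\cdot B\cdot A\cdot (CBC)$ and the $A^2$-collapse of Proposition \ref{prop:2}(vi); your intermediate values $t_{AB^2C}=yP-v$, $t_{ABCBC}=uP-x$, $t_{ABABC}=wP-z$ are all correct, and the packaging via $Q=t_{ACB}$ with $P+Q=X$, $PQ=-Y$ is a cosmetic (if pleasant) reorganization of the same computation. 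The only imprecision is your final irreducibility step: a nonzero discriminant does not make a quadratic in $P$ irreducible over $\bk[x,y,z,u,v,w]$ --- you need that $X^2+4Y$ is not a perfect square, or, more simply, note that $\frX_3$ is irreducible of dimension $6$ and hence a component of $V(F)$, and that $F$ cannot have a root in $\bk[x,y,z,u,v,w]$ because a generic fibre of $\frX_3\to\bk^6$ contains two distinct values of $P$ (swap $B$ and $C$). This last point is a step the paper itself leaves implicit, so your attempt to supply it is welcome; it just needs the corrected justification.
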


\begin{proof}
Since $A,B,C\in\SL_2$, by Proposition \ref{prop:2}$(iv)$, the following holds
 $$t_{ABCABC}=t_{ABC}^2 -2$$ Then
\begin{align*}
  t_{ABCABC} =& \, t_{ABBC}(t_{AC}-t_At_C)+t_At_{ABCBC}+t_Ct_{ABABC}-t_{ABACBC}  \qquad\quad \text{(Prop. \ref{prop:2}}(v))\\
 =&\,  t_{ABBC}(t_{AC}-t_At_C)+t_A \big( t_{ABC}(t_{BC}-t_Bt_C)+t_Bt_{ABCC}+t_Ct_{ABBC}-t_{ABBCC}\big) \\
&+t_C\big( t_{ABC}(t_{AB}-t_At_B)+t_Bt_{AABC}+t_At_{ABBC}-t_{AABBC} \big) \\
&-\big(t_{ACBC}(t_{AB}-t_At_B)+t_Bt_{AACBC}+t_At_{ABCBC}-t_{AABCBC}\big)  \quad \text{(Prop.  \ref{prop:2}}(v)) \\
  =&\,  (t_Bt_{ABC}-t_{AC}) (t_{AC}-t_At_C)+t_A \big( t_{ABC}(t_{BC}-t_Bt_C)+t_B(t_Ct_{ABC}-t_{AB}) \\ & 
   \qquad +t_C(t_Bt_{ABC}-t_{AC})-(t_Bt_Ct_{ABC}-t_Bt_{AB}- t_Ct_{AC}+  t_A)\big) \\
 &+t_C\big( t_{ABC}(t_{AB}-t_At_B)+t_Bt_At_{ABC}-t_Bt_{BC}+t_A(t_Bt_{ABC}-t_{AC}) \\ 
 & \qquad - (t_At_Bt_{ABC}-t_At_{AC}-t_Bt_{BC}+t_C) \big) \\
&-\big(t_{ACBC}(t_{AB}-t_At_B)+t_Bt_At_{ACBC}-t_Bt_{CBC} \\ 
& \qquad +t_At_{ABCBC}-t_At_{ABCBC}
+t_{BCBC}\big)  \qquad\qquad\qquad\qquad\qquad\text{(Prop.  \ref{prop:2}}(vi)) \\
=& \,  -z^2-x^2 +v x z -v^2 + (w z+ v y+ u x- x y z)P\\ 
 &-\big(t_{ACBC}t_{AB}-t_Bt_{CBC} +t_{BCBC}\big)  \\ 
=& \,  -z^2-x^2 +v x z -v^2 +(w z+ v y+ u x- x y z)P\\ 
 &-\Big(  \big(t_{AC}(t_{BC}-t_Bt_C)+t_Ct_{ABC}+t_Bt_{ACC}-t_{ABCC}\big)   t_{AB} \\
 &\qquad -t_B \big(t_{C} (t_{BC}-t_Bt_C)+t_B t_{CC}+t_Ct_{BC}-t_{BCC}\big)  \qquad\qquad\quad \text{(Prop.  \ref{prop:2}}(v))\\
 &\qquad +  t_{BC}^2-2  \Big)  \qquad\qquad \qquad\qquad \qquad\qquad \qquad\qquad\qquad\qquad \,\,\text{(Prop.  \ref{prop:2}}(iv))\\
 =& \,  -z^2-x^2 +v x z -v^2 +(w z+ v y+ u x- x y z)P\\ 
 &-\Big(  \big(t_{AC}(t_{BC}-t_Bt_C)+t_Ct_{ABC}+t_B(t_{AC}t_C-t_A)-t_{ABC}t_C+t_{AB}\big)   t_{AB} \\
 &\qquad -t_B \big(t_{C} (t_{BC}-t_Bt_C)+t_B (t_{C}^2-2)+t_Ct_{BC}-t_{BC}t_C+t_B\big) \\
 &\qquad +   t_{BC}^2 -2  \Big)   \qquad\qquad \qquad\qquad \qquad\qquad\qquad\qquad\qquad\qquad \text{(Prop.  \ref{prop:2}}(vi))\\
 =& \,  -x^2 -y^2 -z^2+u y z +v x z +w x y- u v w \\ &\qquad -u^2-v^2-w^2+2+ (w z+ v y+ u x- x y z) P\, .  
\end{align*}

\end{proof}

Theorem \ref{thm:P2} can be rewritten as the following equality with traces for triples of
matrices $A,B,C\in \SL_2$,
  \begin{equation}\label{eqn:tABC}
 \begin{aligned}
  t_{ABC}^2 =&\,  (t_At_{BC}+t_Bt_{AC}+t_Ct_{AB}-t_At_Bt_C) t_{ABC} -t_A^2-t_B^2-t_C^2 \\
     &+t_At_Bt_{AB}+t_At_Ct_{AC}+t_Bt_Ct_{BC}-t_{AB}^2-t_{AC}^2-t_{BC}^2-t_{AB}t_{AC}t_{BC}+4 \, .
\end{aligned}
 \end{equation}

\begin{corollary} \label{cor:k=3}
The variety $\frX_3=\frX(F_3,\SL_2)$ is a ramified double cover of the plane $\bk^6$. The variables $t_A,t_B, t_C,t_{AB},t_{AC},t_{BC}$
are trascendental generators and the ring of functions
$\cO_{\frX_3}$ is a degree $2$ extension of $\bk[t_A,t_B, t_C,t_{AB},t_{AC},t_{BC}]$. 
\end{corollary}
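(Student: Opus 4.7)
The plan is to read the corollary directly off the explicit equation in Theorem \ref{thm:P2}. Set $R := \bk[x,y,z,u,v,w]$ and rewrite that equation as
\begin{equation*}
F(x,y,z,u,v,w,P) \,=\, P^{2} - \alpha\, P - \beta \,=\, 0,
\end{equation*}
where $\alpha := wz+vy+ux-xyz$ and $\beta := -x^{2}-y^{2}-z^{2}+uyz+vxz+wxy-uvw-u^{2}-v^{2}-w^{2}+4$ lie in $R$. Consider the projection
\begin{equation*}
\pi : \frX_{3}\longrightarrow \bk^{6},\qquad (x,y,z,u,v,w,P)\longmapsto (x,y,z,u,v,w).
\end{equation*}
Since $\bk$ is algebraically closed, the quadratic $F$ has a root in $P$ for every specialisation of its coefficients, so $\pi$ is surjective, and its generic fibre consists of two points; moreover $\pi$ is finite because $P$ satisfies the monic relation $F(P)=0$ over $R$.

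Next I would verify that $\pi$ really has degree $2$, equivalently that $F$ is irreducible in $R[P]$. The variety $\frX_{3}$ is irreducible, being the GIT quotient of the irreducible variety $(\SL_{2})^{3}$ by the connected group $\SL_{2}$. If $F$ factored, being monic and quadratic in $P$ it would split as $(P-g_{1})(P-g_{2})$ with $g_{1},g_{2}\in R$; the case $g_{1}\neq g_{2}$ would yield the non-trivial decomposition $\frX_{3}=V(P-g_{1})\cup V(P-g_{2})$, contradicting irreducibility, while the case $g_{1}=g_{2}$ would force the discriminant $\D := \alpha^{2}+4\beta$ to vanish identically in $R$, which is clearly false by inspection (for instance the monomial $w^{2}z^{2}$ appears in $\alpha^{2}$ but in no term of $4\beta$). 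Hence $F$ is irreducible, so $\pi$ is a genuine double cover ramified along $V(\D)\subset\bk^{6}$, proving the first assertion.

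The remaining statements follow formally. By Proposition \ref{prop:0} we have $\dim \frX_{3}=6$; since $\pi$ is finite and dominant onto $\bk^{6}$, the pull-backs of the coordinate functions $x,y,z,u,v,w$ are algebraically independent elements of $\cO_{\frX_{3}}$, providing the transcendental generators. Ring-theoretically,
\begin{equation*}
\cO_{\frX_{3}} \,=\, R[P]/(F) \,=\, R\,\oplus\, R\cdot P,
\end{equation*}
a free $R$-module of rank $2$, which is exactly the degree-$2$ extension claimed.

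The only non-routine ingredient is the irreducibility of $F$, or equivalently the non-squareness of $\D$ in $R$. The cleanest route is the geometric one above, appealing to irreducibility of the GIT quotient; a direct algebraic alternative is to specialise (for instance by setting $u=v=w=0$) and check by hand that the resulting polynomial in $\bk[x,y,z]$ is not a perfect square, thus giving a self-contained proof.
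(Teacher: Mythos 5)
Your proof is correct and takes the same route the paper does (the paper states this corollary without proof, as an immediate consequence of Theorem \ref{thm:P2}): read the double-cover structure, the algebraic independence of the six trace coordinates, and the rank-two freeness of $\cO_{\frX_3}$ over $R=\bk[x,y,z,u,v,w]$ directly off the monic quadratic relation satisfied by $P$. Your verification that this quadratic is irreducible over $R$ --- via irreducibility of $\frX_3$ as a GIT quotient of $(\SL_2)^3$ together with the observation that the discriminant $\alpha^2+4\beta$ is not identically zero --- is a detail the paper leaves implicit, and your discriminant is the correct one (the paper's remark following the corollary miswrites it as $Y^2-4X$ rather than $X^2+4Y$).
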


By Theorem \ref{thm:P2}, $\frX_3$ is a double cover over $\bk^6$ ramified over $V(\Delta)$, where $\Delta=Y^2-4X$ is the
discriminant, with $X$, $Y$ defined as in (\ref{eqn:XY}). This is a sextic in $\bk^6$. The singularities of $\frX_3\subset \bk^7$ are
at the points $(x,y,z,u,v,w) \in V(\Delta)$, $P=\frac12 X$, which are singular points of $V(\Delta)$.
The singular locus of $\frX_3$ is determined in \cite{Florentino-Lawton:2012}, and it is equal to the 
reducible locus $\frX_3^{\mathrm{red}}$, which consists of representations $(A,B,C)$ which can be put, in a suitable basis as
 $$
  A=\left(\begin{array}{cc} \lambda & 0 \\  0 &\lambda^{-1}\end{array}\right), \quad
  B=\left(\begin{array}{cc} \mu & 0 \\  0 &\mu^{-1}\end{array}\right), \quad
  C=\left(\begin{array}{cc} \nu & 0 \\  0 &\nu^{-1}\end{array}\right).
  $$
This is a $3$-dimensional subspace. 
Equivalently $A,B,C$ are pairwise commuting, which by Equation (\ref{eqn:[A,B]}), it amounts to the equations
 $$
 x^2+y^2+w^2=xyw-2, \  x^2+z^2+v^2=xzv-2, \ y^2+z^2+u^2=yzu-2,  \ P=\frac12 X .
 $$

\section{Generators of the ring of  $\frX(F_k,\SL_2)$ for $k\geq 4$}


Now we give an expression for the trace of matrices that extends Theorem \ref{thm:3} to 
products of more than $4$ matrices.

\begin{theorem}\label{t:p:four-mat}
 Let $A,B,C,D\in\SL_2$. The trace of $ABCD$ can be expressed as a polynomial expression in terms of   $t_{A},t_{B},t_{C},t_{AB},t_{AC},t_{AD},t_{BC},t_{BD},t_{CD},t_{ABC},t_{ABD}, t_{ACD}$ and $t_{BCD}$. More specifically,
\begin{equation}\label{eqn:tABCD}
 \begin{aligned}
    t_{ABCD} = &\,    \frac12\Big( t_At_{BCD}+t_Bt_{ACD}+t_Ct_{ABD}+t_Dt_{ABC}+ t_{AD}t_{BC}- t_{AC}t_{BD}+   t_{AB}t_{CD}  \\ &
  -t_{AD}t_Bt_C -t_{BC}t_At_D-t_{AB}t_Ct_D -t_{CD}t_At_B +t_At_Bt_Ct_D \Big) \, . 
 \end{aligned}
 \end{equation}
\end{theorem}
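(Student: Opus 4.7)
The plan is to derive the formula by assembling three instances of Proposition~\ref{prop:2}(v) (the ``swap'' identity) together with one application of Proposition~\ref{prop:2}(viii). The key observation is that, by cyclic invariance of the trace, the $4!$ orderings of $A,B,C,D$ collapse to exactly six distinct 4-fold traces; a carefully chosen linear combination of three ``swap'' relations will then cancel all four-fold traces other than $t_{ABCD}$.

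Concretely, the three applications of Proposition~\ref{prop:2}(v) I would use are: (I) $P=I$, $Q=BC$, with swap matrices $D$ and $A$, which after the cyclic identity $t_{DABC}=t_{ABCD}$ gives
\[ t_{ABCD}+t_{ADBC}=t_{AD}t_{BC}-t_At_Dt_{BC}+t_At_{BCD}+t_Dt_{ABC}; \]
(II) $P=AB$, $Q=I$, with swap matrices $D$ and $C$:
\[ t_{ABCD}+t_{ABDC}=t_{AB}t_{CD}-t_{AB}t_Ct_D+t_Ct_{ABD}+t_Dt_{ABC}; \]
(III) $P=A$, $Q=C$, with swap matrices $D$ and $B$:
\[ t_{ABDC}+t_{ADBC}=t_{AC}t_{BD}-t_{AC}t_Bt_D+t_Bt_{ADC}+t_Dt_{ABC}. \]
Forming the combination $(\mathrm{I})+(\mathrm{II})-(\mathrm{III})$ annihilates the $t_{ADBC}$ and $t_{ABDC}$ contributions on the left-hand side, leaving exactly $2\,t_{ABCD}$.

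On the right-hand side, the only term not already of the form desired in the final formula is $t_{ADC}$ appearing in (III). Applying Proposition~\ref{prop:2}(viii) to express $t_{ADC}$ via its reverse $t_{ACD}$,
\[ t_{ADC}=t_At_{CD}+t_Ct_{AD}+t_Dt_{AC}-t_At_Ct_D-t_{ACD}, \]
and substituting, I expect the contributions to assemble into the claimed formula: the three pair-products $t_{AD}t_{BC}+t_{AB}t_{CD}-t_{AC}t_{BD}$ come from the leading terms of (I), (II), (III); the four triple-traces $t_At_{BCD}+t_Bt_{ACD}+t_Ct_{ABD}+t_Dt_{ABC}$ arise because $t_At_{BCD}$ comes from (I), $t_Ct_{ABD}$ from (II), $t_Bt_{ACD}$ from the $-t_{ACD}$ inside the expansion of $t_{ADC}$, and $t_Dt_{ABC}$ appears with total coefficient $2-1=1$; the four mixed terms $-t_{AD}t_Bt_C-t_{BC}t_At_D-t_{AB}t_Ct_D-t_{CD}t_At_B$ are collected similarly; and the pure product $t_At_Bt_Ct_D$ comes from the $-t_At_Ct_D$ summand inside $t_{ADC}$ multiplied by the outer $-t_B$ from $-(\mathrm{III})$. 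The essential cancellation is that $-t_{AC}t_Bt_D$ from (III) is killed by the $t_Bt_Dt_{AC}$ contributed by the expansion of $t_{ADC}$; this is precisely what eliminates the would-be fifth mixed term.

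The main obstacle will be bookkeeping, not concept. One must carefully apply cyclic symmetry of the trace to normalize products like $t_{DABC}$ and $t_{DBC}$, and track all signs so that the unwanted four-fold traces cancel and the surviving terms assemble into the stated expression. Once the three relations (I), (II), (III) are in hand, the remaining computation is entirely mechanical; dividing by $2$ yields the claimed formula.
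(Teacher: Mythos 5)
Your proof is correct and follows essentially the same route as the paper's: three applications of Proposition~\ref{prop:2}(v) combined with cyclic invariance of the trace, arranged so that the unwanted four-fold traces cancel and $t_{ABCD}$ survives with coefficient $2$, followed by a single application of Proposition~\ref{prop:2}(viii) to rewrite the reversed triple trace. The only difference is cosmetic: the paper performs a nested chain of substitutions through the cyclic classes of $ACBD$ and $ACDB$, whereas you take an explicit linear combination of swap relations passing through $ADBC$ and $ABDC$.
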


\begin{proof}
 By Proposition \ref{prop:2}$(v)$, we have that
 $$\begin{aligned}
  t_{ABCD} &= t_{AD}(t_{BC}-t_Bt_C) +t_Bt_{ACD}+t_Ct_{ABD}-t_{ACBD} \\
 t_{ACBD} &=   t_{CBDA} = t_{CA}(t_{BD}-t_Bt_D) +t_Bt_{CDA}+t_Dt_{CBA}-t_{CDBA} \\
 t_{CDBA} &=   t_{DBAC} = t_{DC}(t_{BA}-t_Bt_A) +t_Bt_{DAC}+t_At_{DBC}-t_{DABC} 
 \end{aligned}
 $$
  Substituting the expression of each equation into the previous one, and using the cyclicity of the traces,
  namely $t_{DABC}=t_{ABCD}$, we obtain:
 $$
 \begin{aligned}
    t_{ABCD} = &\, t_{AD}(t_{BC}-t_Bt_C) +t_Bt_{ACD}+t_Ct_{ABD}- \big(t_{CA}(t_{BD}-t_Bt_D)   \\ 
    &+t_Bt_{CDA}+t_Dt_{CBA} \big)+ t_{DC}(t_{BA}-t_Bt_A) +t_Bt_{DAC}+t_At_{DBC}-t_{ABCD}  \, ,
 \end{aligned}
 $$
and hence
  $$
  \begin{aligned}
    t_{ABCD} = &\,  \frac12\Big( t_{AD}(t_{BC}-t_Bt_C) +t_Bt_{ACD}+t_Ct_{ABD}- \big(t_{AC}(t_{BD}-t_Bt_D)+t_Bt_{ACD} \\ 
    &+t_Dt_{ACB}\big) +   t_{CD}(t_{AB}-t_At_B) +t_Bt_{ACD}+t_At_{BCD} \Big)  \\
  = &\,  \frac12\Big( t_{AD}(t_{BC}-t_Bt_C) +t_Bt_{ACD}+t_Ct_{ABD}- t_{AC}(t_{BD}-t_Bt_D) - t_Bt_{ACD} \\ 
    &-t_D \big( t_At_{BC}+ t_Bt_{AC}+t_Ct_{AB}-t_At_Bt_C-t_{ABC}\big) 
    +   t_{CD}(t_{AB}-t_At_B) \\ &+t_Bt_{ACD}+t_At_{BCD} )      \Big) 
    \qquad\qquad\qquad  \qquad\qquad\qquad\qquad\qquad\qquad\text{(Prop. \ref{prop:2}$(viii)$)}  \\
    =&\,    \frac12\Big( t_At_{BCD}+t_Bt_{ACD}+t_Ct_{ABD}+t_Dt_{ABC}+
 t_{AD}t_{BC}- t_{AC}t_{BD}+   t_{AB}t_{CD} \\ &
-t_{AD}t_Bt_C -t_{BC}t_At_D-t_{AB}t_Ct_D -t_{CD}t_At_B +t_At_Bt_Ct_D \Big) \, .  \qquad \,\quad \text{(Simplifying)} 
 \end{aligned}
$$
\end{proof}

As a consequence, the ring of functions of $\frX_k=\frX(F_k,\SL_2)$ is generated by traces of the product of at most 
three matrices. This completes the proof of Theorem \ref{thm:3-intro}.
     
\begin{corollary}\label{cor:thm:3}
 Let $A_1,\ldots, A_k\in \SL_2$, take a monomial $x= A_{i_1}^{r_1} \ldots A_{i_m}^{r_m}$, with $1\leq i_1,\ldots, i_m\leq k$ and $r_j \in \ZZ$.
Then $t_x$ has a (polynomial) expression in terms of 
 $$
 T_{i_1\ldots i_p}:= t_{A_{i_1}\ldots A_{i_p}} ,  1\leq i_1<\ldots < i_p \leq k,
 $$
with $p\leq 3$. Therefore the ring 
 $$
 \cO_{\frX} = \bk \big[ \{T_{i_1\ldots i_p}\}_{1\leq i_1<\ldots < i_p \leq k, 1\leq p\leq 3} \big]/\cI\, ,
 $$
for some ideal $\cI$ of relations.
\end{corollary}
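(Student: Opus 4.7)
The plan is to combine Theorem \ref{thm:3} (sorted-index reduction) with Theorem \ref{t:p:four-mat} (the four-matrix trace identity) and run a straightforward induction on the length of the word. By Theorem \ref{thm:3}, we already know that $t_x$ is a polynomial in the sorted-index generators $T_{i_1\ldots i_p}$ with $1\leq i_1<\ldots<i_p\leq k$, but with no a~priori bound on $p$. So the content of the corollary is that every such $T_{i_1\ldots i_p}$ with $p\geq 4$ can itself be written as a polynomial in the $T_{j_1\ldots j_q}$ with $q\leq 3$.

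I would prove this by induction on $p\geq 4$. For the base case $p=4$, apply Theorem \ref{t:p:four-mat} directly with $A=A_{i_1}$, $B=A_{i_2}$, $C=A_{i_3}$, $D=A_{i_4}$: formula (\ref{eqn:tABCD}) writes $T_{i_1 i_2 i_3 i_4}$ as a polynomial in traces of single matrices, of pairs, and of triples chosen from $\{A_{i_1},A_{i_2},A_{i_3},A_{i_4}\}$. These are already (after relabelling in increasing order, which is trivial since the indices are already sorted) among the generators $T_{j_1\ldots j_q}$ with $q\leq 3$.

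For the inductive step $p\geq 5$, I would apply Theorem \ref{t:p:four-mat} with $A=A_{i_1}$, $B=A_{i_2}$, $C=A_{i_3}$, and $D=A_{i_4}A_{i_5}\cdots A_{i_p}\in\SL_2$ treated as a single matrix. This is legitimate because Theorem \ref{t:p:four-mat} holds for any four elements of $\SL_2$, not just basis elements. Formula (\ref{eqn:tABCD}) then expresses $T_{i_1\ldots i_p}$ as a polynomial in traces of the form $t_{A_{i_j}}$, $t_D$, $t_{A_{i_j}A_{i_\ell}}$, $t_{A_{i_j}D}$, $t_{A_{i_j}A_{i_\ell}D}$ and $t_{A_{i_1}A_{i_2}A_{i_3}}$. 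Every one of these is the trace of a product involving at most $p-1$ of the matrices $A_{i_1},\ldots,A_{i_p}$: for instance $t_{A_{i_1}A_{i_2}D}$ is a trace of a word of length $p-1$, while $t_{A_{i_j}D}$ involves $p-2$ matrices and $t_D$ involves $p-3$. Applying Theorem \ref{thm:3} to each of these shorter words to sort the indices, and then the inductive hypothesis, replaces them by polynomials in $T_{j_1\ldots j_q}$ with $q\leq 3$, which finishes the induction.

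The only subtle point, and the step I would write out carefully, is the verification that after substituting $D$ for the tail product, every trace appearing on the right-hand side of (\ref{eqn:tABCD}) indeed corresponds to a strictly shorter word in the $A_{i_j}$'s, so that the induction on $p$ is well-founded; the fourfold trace $t_{ABCD}$ on the left is the only term of length $p$, and it appears only as the quantity we are solving for. Once this is noted, the conclusion about the presentation $\cO_{\frX_k}=\bk[\{T_{i_1\ldots i_p}\}_{p\leq 3}]/\cI$ is immediate from Theorem \ref{thm:3}, since we have just shown that the extra generators $T_{i_1\ldots i_p}$ with $p\geq 4$ used there are redundant.
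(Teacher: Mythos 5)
Your proposal is correct and follows essentially the same route as the paper: the base case $p=4$ is Theorem \ref{t:p:four-mat} applied directly, and the inductive step groups the tail as $Q=A_{i_4}\cdots A_{i_p}$ and applies the same identity to $A_{i_1}A_{i_2}A_{i_3}Q$, reducing to words of length at most $p-1$. Your explicit check that every trace on the right-hand side of (\ref{eqn:tABCD}) is strictly shorter is a welcome bit of care (the paper states this bound slightly loosely), but the argument is the same.
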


\begin{proof}
By Theorem \ref{thm:3}, $T_{i_1\ldots i_p}$ with $1\leq p\leq k$, gives generators of the ring of all $t_x$. Now by Theorem \ref{t:p:four-mat},
$t_{A_{i_1}\ldots A_{i_4}}$ is expressible in terms of all traces of one, two and three matrices among $A_{i_1},\ldots, A_{i_4}$.  
In general,  for $p\geq 4$, 
 $$
  t_{A_{i_1}\ldots A_{i_p}} =t_{A_{i_1}A_{i_2} A_{i_3} (A_{i_4}\cdots A_{i_p})} 
  $$
 is expressible in terms of the traces of products of  one, two and three matrices among $A_{i_1},A_{i_2}, A_{i_3}$, and $ Q:=A_{i_4}\cdots A_{i_p}$.
 These are traces of products of less than $p-1$ matrices. By induction, we get the result.
 \end{proof}
 
In virtue of Corollary \ref{cor:thm:3}, 
the number of generators of the ring $\cO_{\frX_k}$ is
 \begin{equation*}
 k +\binom{k}{2} +\binom{k}{3}\, .
 \end{equation*}

If we look at the case $k=4$, to parametize
 $$ 
  \frX_4=\frX(F_4,\SL_2)=\{(A,B,C,D) | \, A,B,C ,D \in \SL_2\} \sslash \SL_2, 
 $$
Corollary \ref{cor:thm:3} says that we need  the traces
 $$
 t_A, t_B, t_C, t_D, t_{AB},t_{AC},t_{AD}, t_{BC},t_{BD},t_{CD}, t_{ABC}, t_{ABD}, t_{ACD}, t_{BCD}
 $$
giving an embedding $\frX(F_4,\SL_2) \subset \bk^{14}$. By 
Proposition \ref{prop:0}, $\dim \frX(F_4,\SL_2)=9$, so $5$ of the above traces
are algebraically dependent of the other ones. Letting 
\begin{equation}\label{eqn:XY}
 \begin{aligned} 
 X(x,y,z,u,v,w) &:= w z+ v y+ u x- x y z, \\
 Y(x,y,z,u,v,w) &:= -x^2 -y^2 -z^2+u y z +v x z +w x y- u v w -u^2-v^2-w^2+4,
 \end{aligned}
 \end{equation}
we have an equation
\begin{equation}\label{eqn:XY2}
 t_{ABC}^2= X( t_A, t_B, t_C, t_{BC}, t_{AC},t_{AB}) \cdot t_{ABC}+ Y( t_A, t_B, t_C, t_{BC}, t_{AC},t_{AB}) ,
 \end{equation}
and similarly for the others $t_{ABD}, t_{ACD}, t_{BCD}$. This gives four algebraic dependent variables.

\begin{proposition} \label{p:tCD}
The trace $t_{CD}$ is algebraically dependent with 
  $t_A, t_B, t_C, t_D, t_{AB}$, $t_{AC},t_{BC}$, $t_{AD}$ and $t_{BD}$. Therefore
  $t_A, t_B, t_C, t_D, t_{AB},t_{AC},t_{BC}, t_{AD},t_{BD}$ are trascendental  
 generators of $\cO_{\frX_4}$.
\end{proposition}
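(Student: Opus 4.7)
The plan is dimensional. By Proposition \ref{prop:0}, $\dim \frX_4 = 9$, so the function field $\bk(\frX_4)$ has transcendence degree $9$ over $\bk$. The proposition therefore reduces to the single claim that the nine listed traces $t_A, t_B, t_C, t_D, t_{AB}, t_{AC}, t_{BC}, t_{AD}, t_{BD}$ are algebraically independent on $\frX_4$; once that is shown, $t_{CD}$ is automatically algebraic over them, and, combining the quadratic relations \eqref{eqn:XY2} for $t_{ABC}, t_{ABD}, t_{ACD}, t_{BCD}$ with Corollary \ref{cor:thm:3}, the ring $\cO_{\frX_4}$ is a finite extension of $\bk[t_A, t_B, t_C, t_D, t_{AB}, t_{AC}, t_{BC}, t_{AD}, t_{BD}]$.

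To establish algebraic independence I would show that the morphism $\psi\colon \frX_4 \to \bk^9$ defined by these nine traces is dominant, splitting the data as $(A, B, C, D) = ((A, B), C, D)$. By Corollary \ref{cor:k=2}, a generic pair $(A, B)$ may be placed in a $3$-parameter normal form indexed freely by $(t_A, t_B, t_{AB}) \in \bk^3$, with stabilizer $\{\pm I\}$ under conjugation. With such $(A, B)$ fixed, the remaining six coordinates of $\psi$ split as the product of two single-variable maps $\alpha\colon \SL_2 \to \bk^3$, $C \mapsto (t_C, t_{AC}, t_{BC})$, and the analogous $\beta\colon D \mapsto (t_D, t_{AD}, t_{BD})$. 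By Corollary \ref{cor:k=3}, $\frX_3$ is a ramified double cover of $\bk^6$ via the six pairwise and single traces; slicing this double cover along $(t_A, t_B, t_{AB}) = \text{const.}$ realizes $\alpha$ as a generically $2$-to-$1$ dominant map onto $\bk^3$, and similarly for $\beta$. Hence $\alpha \times \beta$, and therefore $\psi$, is dominant.

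The main obstacle is making the dominance of $\alpha$ fully rigorous, i.e.\ checking that the $3$-dimensional slice of $\frX_3$ at fixed $(t_A, t_B, t_{AB})$ really does surject onto $\bk^3$ under $(t_C, t_{AC}, t_{BC})$ for generic $(A, B)$. The cleanest verification is a direct Jacobian computation in a convenient normal form, for example $A = \operatorname{diag}(\lambda, \lambda^{-1})$ with $\lambda^2 \neq 1$ and $B$ lower-triangular with nonzero $(2,1)$-entry: parametrizing $C \in \SL_2$ by three of its matrix entries (with $\det C = 1$ eliminating the fourth), one checks that the Jacobian of $(t_C, t_{AC}, t_{BC})$ is non-vanishing at a generic $C$. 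Once this is secured, $\psi$ is dominant, the nine listed traces form a transcendence basis of $\bk(\frX_4)$, and the algebraicity of $t_{CD}$ over them is forced by the dimensional bound, completing both assertions of the proposition.
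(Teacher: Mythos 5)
Your argument is correct, and it shares the paper's dimensional skeleton --- both proofs rest on $\dim\frX_4=9$ forcing an algebraic relation among the ten single and pairwise traces --- but you handle the crucial independence claim quite differently. The paper's proof is soft: it notes that the ten traces generate a field of transcendence degree $9$, that $t_A,t_B,t_C,t_D$ are algebraically independent, extends these to a transcendence basis inside the ten traces (so exactly one pairwise trace is left out), and then invokes the $S_4$-symmetry permuting the matrices to assume without loss of generality that the omitted trace is $t_{CD}$. You instead prove directly that the specific nine traces $t_A,\dots,t_{BD}$ are algebraically independent by showing the map $\psi\colon\frX_4\to\bk^9$ is dominant, splitting it over a generic $(A,B)$ into the product of $C\mapsto(t_C,t_{AC},t_{BC})$ and $D\mapsto(t_D,t_{AD},t_{BD})$ and checking dominance of each factor either by slicing the double cover of Corollary \ref{cor:k=3} or by a Jacobian computation (which does go through, e.g.\ with $A$ diagonal non-central and $B$ lower triangular with nonzero corner entry the three differentials are independent on $T_C\SL_2$ for generic $C$). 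Your route is more work but also more informative: it pins down these particular nine traces as a transcendence basis without the relabeling step, and it is robust if one distrusts ``clearly'' in the paper's exchange argument. The paper's route is shorter and exploits the symmetry that your decomposition ignores. One small overreach on your side: Corollary \ref{cor:thm:3} and the quadratic relations \eqref{eqn:XY2} make $\cO_{\frX_4}$ integral over the subring generated by all ten single and pairwise traces, but you have not shown $t_{CD}$ to be \emph{integral} over the polynomial ring in the other nine, so you should claim only that $\cO_{\frX_4}$ is algebraic over $\bk[t_A,\dots,t_{BD}]$, which is all the statement (``trascendental generators'') requires.
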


\begin{proof}
 Clearly there is an algebraic dependence relation between all these variables, as
 the transcendental degree of the field that they generate is $9$. The variables
 $t_A,t_B,t_C,t_D$ are clearly algebraically independent. Therefore, there is one 
 of the other variables that depends algebraically on the rest. Permuting the order
 of the matrices, we can assume that it is $t_{CD}$.
 \end{proof}
 
 It is not easy to find out an explicit algebraic equation satisfied by $t_{CD}$ in Proposition
 \ref{p:tCD}. This can be done as follows.
Consider the element $t_{ABCD}=t_{(AB)CD}$ and apply Equation (\ref{eqn:tABC}) to get
 \begin{equation}\label{e:t.}
 t_{ABCD}^2= X(t_{AB},t_C,t_{D},t_{CD},t_{ABD},t_{ABC}) t_{ABCD}+ Y(t_{AB},t_C,t_{D},t_{CD},t_{ABD},t_{ABC}),
 \end{equation}
with the expressions $X,Y$ appearing in (\ref{eqn:XY}). Now use 
Equation (\ref{eqn:tABCD}) to susbstitute $t_{ABCD}$ in the above. This 
gives an equation involving   $t_A, \ldots,  t_D, t_{AB}, \ldots, t_{CD}$ and $t_{ABC},t_{ABD}, t_{ACD}$, $t_{BCD}$. 
Using Theorem \ref{thm:P2} we have algebraic equations for $t_{ABC},\ldots, t_{BCD}$ in terms of
the traces $t_A,t_B,t_C,t_D, t_{AB},t_{AC},t_{AD},t_{BC},t_{BD}, t_{CD}$. This will yield an equation involving
all required traces.
Note that we can also work out an equation like (\ref{e:t.}) for $t_{ABCD}=t_{A(BC)D}$ or $t_{ABCD}=t_{AB(CD)}$
or $t_{ABCD}=t_{(DA)BC}$. This can serve to eliminate $t_{ABCD}$.

\begin{corollary} \label{c:p:cor}
In $\frX_k=\frX(F_k,\SL_2)$, we have parameters for $(A_1,\ldots, A_k)$ given by 
$t_{A_i}$, $t_{A_iA_j}$, $t_{A_iA_jA_k}$, $i<j<k$. The parameters
$$
t_{A_1},t_{A_2}, t_{A_1A_2}, \qquad t_{A_j}, t_{A_1A_j}, t_{A_2A_j},  \quad  j\geq 3,
$$
are transcendental generators of $\cO_{\frX_k}$.
\end{corollary}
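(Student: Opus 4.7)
The first claim, that $\{t_{A_i}, t_{A_iA_j}, t_{A_iA_jA_l}: i<j<l\}$ parametrize $\frX_k$, is exactly Corollary \ref{cor:thm:3}. For the transcendence statement, the listed set has $3+3(k-2)=3(k-1)$ elements, matching $\dim\frX_k$ from Proposition \ref{prop:0} and hence the transcendence degree of $\mathrm{Frac}(\cO_{\frX_k})$ over $\bk$. It therefore suffices to show that these $3(k-1)$ functions are algebraically independent: every other generator produced by Corollary \ref{cor:thm:3} is then automatically algebraic over them, and the listed set is a transcendence basis.

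To establish algebraic independence, I propose to show that the morphism $\Phi\colon\frX_k\to\bk^{3(k-1)}$ assembled from the listed traces is dominant. Factor $\Phi$ through the projection $\pi\colon\frX_k\to\frX_2$ onto the first two matrices. By Corollary \ref{cor:k=2} the map $(A_1,A_2)\mapsto(t_{A_1},t_{A_2},t_{A_1A_2})$ is an isomorphism $\frX_2\cong\bk^3$, so the first three coordinates of $\Phi$ already hit every value. For each $j\geq 3$, the remaining task on the generic fiber of $\pi$ is to show that, for generic $(A_1,A_2)\in(\SL_2)^2$, the map
\[
\psi\colon\SL_2\longrightarrow\bk^3,\qquad A\longmapsto\bigl(\tr A,\,\tr(A_1A),\,\tr(A_2A)\bigr)
\]
is dominant. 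A differential computation at a generic $A\in\SL_2$ reduces this to the linear independence of the three functionals $\tr(\,\cdot\,),\,\tr(A_1\,\cdot\,),\,\tr(A_2\,\cdot\,)$ restricted to the tangent space $T_A\SL_2=\ker\tr(A^{-1}\,\cdot\,)\subset\mathrm{Mat}_2(\bk)$; via the trace pairing this is in turn equivalent to the linear independence of $I,A_1,A_2,A^{-1}$ in $\mathrm{Mat}_2(\bk)$, an open nonempty condition. Iterating over $j=3,\ldots,k$ then yields dominance of $\Phi$.

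The main obstacle is the reassembly step, i.e.\ verifying that fiberwise dominance of $\psi$ lifts to dominance of $\Phi$ on the GIT quotient $\frX_k=(\SL_2)^k\sslash\SL_2$. The key observation is that for a generic irreducible pair $(A_1,A_2)$ the stabilizer $\Stab(A_1,A_2)\subset\SL_2$ is finite, so the quotient of each $\pi$-fiber by this stabilizer is still a dominant image of $(\SL_2)^{k-2}$, and the fiberwise dominance of $\psi$ (one copy for each $j\ge 3$) assembles into dominance of the full coordinate map $\Phi$. Combined with Corollary \ref{cor:thm:3}, this completes the proof of the corollary.
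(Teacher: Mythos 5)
Your proof is correct, but it runs in the opposite direction from the paper's. The paper first shows that every generator from Corollary \ref{cor:thm:3} \emph{not} in the list is algebraic over the listed traces: the triple traces via the quadratic relation (\ref{eqn:XY2}), and the pair traces $t_{A_iA_j}$ with $3\le i<j$ via Proposition \ref{p:tCD} applied to the quadruple $(A_1,A_2,A_i,A_j)$; algebraic independence of the list is then deduced from the count $3(k-1)=\dim\frX_k$. You instead prove algebraic independence directly, by showing the map $\Phi$ to $\bk^{3(k-1)}$ is dominant through a fiberwise differential computation, and let the same count deliver the algebraicity of the omitted generators. Both deductions are legitimate, since any $3(k-1)$ algebraically independent elements of a field of transcendence degree $3(k-1)$ form a transcendence basis. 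Your differential criterion --- surjectivity of $d\psi_A$ is equivalent to linear independence of $I,A_1,A_2,A^{-1}$ in the $2\times 2$ matrices --- is correct and holds on a nonempty open set. One simplification: the stabilizer discussion at the end is unnecessary. Since $\Phi$ is induced by an $\SL_2$-invariant map on $(\SL_2)^k$, dominance of $\Phi$ is equivalent to dominance of that map upstairs, which splits over a generic $(A_1,A_2)$ into independent factors $A_j\mapsto(t_{A_j},t_{A_1A_j},t_{A_2A_j})$ for $j\ge 3$; no descent through the GIT quotient is needed. What the paper's route buys is explicit algebraic equations expressing the omitted generators in terms of the listed ones (which it uses later, e.g.\ for the genus $2$ surface); what yours buys is independence from Proposition \ref{p:tCD}, whose own proof is a soft symmetry argument rather than an explicit relation.
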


\begin{proof}
First, by Proposition \ref{prop:0} the dimension of $\frX_k$ is $3(k-1)$. Now $t_{A_1},t_{A_2}, t_{A_1A_2}$ generate
$\cO_{\frX_2}$ by Corollary \ref{cor:k=2}. For $k=3$, Corollary \ref{cor:k=3} says that
$t_{A_1},t_{A_2}, t_{A_3}, t_{A_1A_2}, t_{A_1A_3}$, $t_{A_2A_3}$ are transcendental generators of $\cO_{\frX_3}$.
For $k\geq 4$, we use Proposition \ref{p:tCD} applied to $(A_1,A_2,A_i,A_j)$ to get an algebraic equation 
of $t_{A_iA_j}$ in terms of 
$t_{A_1},t_{A_2}, t_{A_i}, t_{A_j}$, $t_{A_1A_2}$, $t_{A_1A_i}, t_{A_1A_j}, t_{A_2A_i},t_{A_2A_j}$.
Therefore the given set of traces are transcendental generators. There cannot be less than they are 
because $\dim \frX_k=3k-3$, which is the number of parameters in the list.
\end{proof}

\section{Character variety of the $2$-torus}

Now we are going to focus on the $2$-torus $T^2$ and the space of representations of its fundamental group
$\G=\pi_1(T^2)=\la x,y | \, [x,y]=1\ra$ in $\SL_2$. By the general description in (\ref{eqn:rep}), we have that 
the character variety of a finitely generated group embeds as a subvariety of the character variety of the
free group $F_k$, where $k$ is the number of generators of the group. In this situation,
 $$
 \frX_{T^2}= \frX(T^2, \SL_2) \subset \frX(F_2, \SL_2) =\bk^3\, ,
  $$
the last equality by Corollary \ref{cor:k=2}. Then $\frX_{T^2}$ is parametrized by $(t_A,t_B,t_{AB})$,
and there will be an equation describing this variety. To find it, we work out a relation for the trace
of a commutator.

\begin{lemma} \label{lem:t[A,B]}
For matrices $A,B\in \SL_2$, we have
 \begin{equation}\label{eqn:[A,B]}
 t_{[A,B]}= t_A^2+t_B^2+t_{AB}^2-t_At_Bt_{AB} -2.
 \end{equation}
\end{lemma}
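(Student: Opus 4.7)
The plan is to compute $t_{[A,B]} = t_{ABA^{-1}B^{-1}}$ by successively eliminating the inverses using the identities from Proposition~\ref{prop:2}. Everything reduces to the inverse formula $(vii)$, the square formula $(vi)$, and the identity $(iv)$ for $t_{ABAB}$.

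First I would apply $(vii)$ with $P = AB$, $Q = B^{-1}$, and the inverted matrix $A$, which gives
\[
t_{ABA^{-1}B^{-1}} = t_A\, t_{AB\cdot B^{-1}} - t_{AB\cdot A\cdot B^{-1}} = t_A^2 - t_{ABAB^{-1}}\,,
\]
using $AB\cdot B^{-1} = A$. Next, I would apply $(vii)$ a second time, now with $P = ABA$, $Q = I$, and inverted matrix $B$, obtaining
\[
t_{ABAB^{-1}} = t_B\, t_{ABA} - t_{ABAB}\,.
\]

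Then I would rewrite $t_{ABA} = t_{A^2 B}$ by cyclicity of trace, and use $(vi)$ with $P = I$, $Q = B$ to get $t_{A^2 B} = t_A t_{AB} - t_B$. Applying $(iv)$ gives $t_{ABAB} = t_{AB}^2 - 2$. Substituting back yields
\[
t_{ABAB^{-1}} = t_B(t_At_{AB} - t_B) - (t_{AB}^2 - 2) = t_At_Bt_{AB} - t_B^2 - t_{AB}^2 + 2\,,
\]
and inserting this into the first equation produces the desired formula
\[
t_{[A,B]} = t_A^2 + t_B^2 + t_{AB}^2 - t_At_Bt_{AB} - 2\,.
\]

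There is no real obstacle here; the only mild pitfall is bookkeeping of signs through the two applications of $(vii)$. An alternative route would be to substitute $A^{-1} = t_A I - A$ and $B^{-1} = t_B I - B$ directly into $ABA^{-1}B^{-1}$ and expand, but this is messier, whereas the inductive application of the identities in Proposition~\ref{prop:2} is essentially automatic.
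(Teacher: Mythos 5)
Your argument is correct: each application of Proposition~\ref{prop:2}$(vii)$ is instantiated properly (with $P=AB$, $Q=B^{-1}$ and then $P=ABA$, $Q=I$), the cyclicity step $t_{ABA}=t_{A^2B}$ is fine, and the final substitution gives exactly \eqref{eqn:[A,B]}. The route is genuinely different from the paper's, though. You work entirely at the level of traces, recursively stripping the inverses with the identities of Proposition~\ref{prop:2}; the paper instead expands the commutator as a matrix, substituting $A^{-1}=t_AI-A$, $B^{-1}=t_BI-B$ and using Lemma~\ref{lem:1} and \eqref{e:square} to arrive at the matrix identity $[A,B]=-t_{AB}t_B A+t_{AB}AB+t_B^2I-t_BB+t_AA-I$, and only then takes traces. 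Your version is shorter and less error-prone for the stated lemma; the paper's "messier" matrix computation has the advantage of producing a stronger intermediate statement, namely an expression for the matrix $[A,B]$ itself in the span of $I,A,B,AB$, which is reused later (via Equation~\eqref{eqn:ttt}) to compute $t_{[A,B]C}$ in the genus~$2$ section. So your alternative proves the lemma but would not by itself supply that later ingredient.
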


\begin{proof}
 We compute
  \begin{equation}\label{eqn:ttt}
  \begin{aligned}
   [A,B] =& ABA^{-1}B^{-1}= AB(t_A I-A) B^{-1} \qquad \qquad \quad \qquad\qquad\qquad \qquad \textrm{(by Eqn.\ (\ref{e:inverse}))} \nonumber\\
    =& \,t_A A- ABAB^{-1} \nonumber\\
     =& \,t_A A - A \big( (t_{AB}-t_At_B)I + t_A B+ t_B A - AB) B^{-1} \qquad \qquad \qquad \textrm{(by Eqn.\ (\ref{e:comm}))} \nonumber\\
    =& \,t_A A-   (t_{AB}-t_At_B)AB^{-1} - t_A A- t_B A^2B^{-1} + A^2   \nonumber\\
    =& - (t_{AB}-t_At_B)A(t_B I-B) - t_B (t_A A-I)(t_B I-B) + t_A A-I  \qquad \textrm{(Eqn.\ (\ref{e:square}))} \nonumber\\
    =& - t_{AB}t_B A + t_At_B^2 A +t_{AB}AB-t_At_B AB    - t_At_B^2 A +t_B^2 I \\ &+ t_At_B AB-t_B B     + t_A A-I \nonumber\\
    =& - t_{AB}t_B A +t_{AB} AB  +t_B^2 I  -t_B B + t_A A-I \, .
   \end{aligned}
   \end{equation}
Taking traces,
 $$
 t_{[A,B]}= -t_{AB}t_Bt_A+t_{AB}^2+2t_B^2-t_B^2+t_A^2-2,
 $$
producing the result. 
\end{proof}

From now on, we fix the ground field $\bk=\CC$.
Take a conjugacy class $[\xi]=\SL_2\cdot \xi$ determined by an element $\xi\in \SL_2$ (the action of $\SL_2$ by conjugation). 
We have the \emph{twisted} moduli space of representations as defined in (\ref{eqn:Mxi}),
 $$
  \cM_\xi  = \{ (A,B)\in (\SL_2)^2 | \, [A,B]\in [\xi] \} \sslash \SL_2\, .
  $$
There are five different types of conjugacy classes. We have $[I], [-I], [J_+],  [J_-]$ and $[\xi_t]$, where 
$J_\pm =\left( \begin{array}{cc} \pm1 & 0\\ 1 & \pm1\end{array}\right)$ are the Jordan types, and 
$\xi_t=\left( \begin{array}{cc} \lambda & 0\\ 0 & \lambda^{-1} \end{array}\right)$, $t=\lambda+\lambda^{-1}$, $\lambda\in \CC-\{0,\pm 1\}$,
are the diagonal types. Consider the trace map
 $$
  \tr: \SL_2 \longrightarrow \CC
  $$
and note that 
 \begin{align*}
 \tr^{-1}(t) &=  [\xi_t], \qquad t\in \CC-\{\pm 2\}, \\
 \tr^{-1}(2) &= [I] \sqcup [J_+], \\
 \tr^{-1}-(2) &= [-I] \sqcup [J_-]. 
 \end{align*}

Using the variables $x=t_A$, $y=t_B$, $z=t_{AB}$, Lemma \ref{lem:t[A,B]} gives the function
 \begin{equation}\label{eqn:ch-torus}
  F(x,y,z)=t_{[A,B]}= x^2 +y^2+z^2 - xyz -2.
 \end{equation}
Then we have the following \emph{twisted character varieties}:
 $$
 \frX_t= F^{-1}(t)=\{ (A,B)\in \SL_2 | \, \tr([A,B])=t\}\sslash \SL_2,
 $$
for $t\in \CC$. Then
 \begin{itemize}
 \item $\frX_t= \cM_{\xi_t}$, for $t\neq \pm 2$,
 \item $\frX_2=\cM_{I}\cup \cM_{J_+}$,
 \item $\frX_{-2}=\cM_{-I}\cup \cM_{J_-}$.
 \end{itemize}
   
\begin{remark}
Note the symmetry of equation (\ref{eqn:ch-torus}). This is given by the change of generators
$(A,B) \mapsto (AB,B^{-1})$, that changes $(t_A,t_B,t_{AB}) \mapsto (t_{AB}, t_B,t_A)$.
\end{remark}

We study the geometry of the character varieties $\frX_t$, and recover results of Theorem \ref{thm11-intro}.
Now we prove Theorem \ref{thm11bis-intro}.

\begin{theorem} \label{thm11bis}
Let $t\in\CC$. We have the following:
\begin{itemize}
 \item For $t\neq \pm2$,  the character variety $\frX_t\subset \CC^3$ is a smooth surface. 
 The $E$-polynomial of $\frX_t$ is $e(\frX_t)=q^2+4q+1$.
 \item For $t=2$, the character variety $\frX_2\subset \CC^3$ has $4$ ordinary double points. $\cM_{J_+}\subset \cM_I$, 
 $\frX_2=\cM_{I}$, and $e(\frX_2)=q^2+1$.
 \item For $t=-2$, the character variety $\frX_{-2}\subset \CC^3$ has only one singular point which is an ordinary double 
 point. Now $\frX_{-2}=\cM_{-I}\sqcup \cM_{J_-}$, and $e(\frX_{-2})=q^2+3q+1$.
  \end{itemize}
 \end{theorem}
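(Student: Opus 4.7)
The plan is to recognize $\frX_t$ as the complement of a fixed boundary triangle of lines inside a cubic surface in $\PP^3$, and then read off the $E$-polynomials from the classification of (weak) del Pezzo surfaces of degree $3$; the identification with the $\cM_\xi$ follows from a semisimplification argument once the trace identities are in place.

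I would first locate the singular points of $\frX_t=F^{-1}(t)$. The gradient $\nabla F=(2x-yz,\,2y-xz,\,2z-xy)$ vanishes at $(0,0,0)$ (where $F=-2$) and at the four points $(\pm2,\pm2,\pm2)$ with an even number of minus signs (where $F=2$), since either one coordinate is zero (forcing all to be) or, multiplying the three equations, $(xyz)^2=8xyz$ gives $xyz=8$ and $x^2=y^2=z^2=4$. A direct calculation gives $\det H_F=8-2(x^2+y^2+z^2)-2xyz$, which is nonzero at each critical point, so all five are ordinary double points. This yields the first half of each bullet: smoothness for $t\neq\pm2$ and the stated count of nodes for $t=\pm2$.

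Next I pass to the projective closure $\bar\frX_t\subset\PP^3$ cut out by the cubic $W(x^2+y^2+z^2)-xyz=(t+2)W^3$ in homogeneous coordinates $[W:x:y:z]$. The boundary $B_\infty=\bar\frX_t\cap\{W=0\}$ is the vanishing of $xyz$ in $\PP^2_\infty$, i.e.\ three lines meeting pairwise in a triangle, with $e(B_\infty)=3(q+1)-3=3q$ independent of $t$; a check of partial derivatives shows $\bar\frX_t$ is smooth along $B_\infty$, so all of its singularities are the affine ones found above. For $t\neq\pm2$, $\bar\frX_t$ is then a smooth cubic surface in $\PP^3$, hence a del Pezzo of degree $3$ ($\PP^2$ blown up at $6$ points), with $e(\bar\frX_t)=q^2+7q+1$. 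For $t=\pm2$, $\bar\frX_t$ has only $A_1$ singularities, so the minimal resolution is a weak del Pezzo of degree $3$ with the same $E$-polynomial $q^2+7q+1$; each resolved node replaces a point by a $(-2)$-curve $\cong\PP^1$, changing $e$ by $q$. Hence $e(\bar\frX_t)=q^2+7q+1-n_t q$ with $n_2=4$ and $n_{-2}=1$, and subtracting $e(B_\infty)=3q$ produces the three values $q^2+4q+1$, $q^2+1$, and $q^2+3q+1$ as claimed.

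Finally I identify the strata with the $\cM_\xi$ via trace identities. Any reducible pair $(A,B)$ is simultaneously triangularizable, so $[A,B]$ is upper triangular unipotent with $t_{[A,B]}=2$; conversely, every character in $\{F=2\}$ is realized by a commuting diagonal pair $(\diag(s,s^{-1}),\diag(r,r^{-1}))$, and Fricke's uniqueness of the semisimple representative then forces no irreducible pair to have character in $\frX_2$. Hence $\frX_2=\cM_I$, and $\cM_{J_+}\subset\cM_I$, the pairs with commutator $J_+$ being reducible and semisimplifying to commuting pairs with the same character. For $t=-2$, the condition $[A,B]=-I$ forces $ABA^{-1}=-B$ and $BAB^{-1}=-A$, hence $t_A=t_B=t_{AB}=0$, and such a pair is irreducible and unique up to conjugation, giving the single character $(0,0,0)$, which is exactly the ODP of $\frX_{-2}$; pairs with $[A,B]\sim J_-$ are irreducible (since any reducible pair has $t_{[A,B]}=2$, not $-2$), so by Fricke they sweep out the complement $\frX_{-2}\setminus\{(0,0,0)\}=\cM_{J_-}$, yielding the disjoint union. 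The main obstacle I expect is this last step: cleanly justifying the reducibility of pairs with commutator $\sim J_+$ and matching the conjugacy classes of commutators to the precise subsets of $\frX_t$; once that is in hand, the $E$-polynomial computation follows directly from the cubic-surface picture.
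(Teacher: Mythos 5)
Your proposal is correct and follows the same overall route as the paper: locate the affine singularities of $F^{-1}(t)$ via the gradient, pass to the projective cubic closure in $\PP^3$, check smoothness along the triangle at infinity, and subtract $e(V_\infty)=3q$. Two sub-steps are handled differently. For the $E$-polynomial of the nodal cubics, the paper argues that $V(\hat F_{\pm 2})$ arises as a degeneration of the smooth cubic and that each ordinary double point drops $b_2$ by one; your route through the minimal resolution --- a weak del Pezzo of degree $3$, hence a six-point blow-up of $\PP^2$ with $e=q^2+7q+1$, minus $nq$ for the contracted $(-2)$-curves --- reaches the same numbers and is, if anything, easier to make fully rigorous, since it only uses additivity of $e$ under the resolution map. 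For the identification of the strata, the paper quotes the explicit normal forms of the fibres $X_2$, $X_3$ from [LMN, Sections 4.2--4.4] (lower-triangular pairs for commutator $J_+$, and the conditions $z=2(x+w)$, $c=-2(a+d)$ for commutator $J_-$) and exhibits $\cM_{J_+}\subset \cM_I$ by the conjugation limit $\diag(t,1)$, $t\to 0$; you instead use the classical criterion that $(A,B)$ is reducible iff $\tr[A,B]=2$, together with separation of closed orbits in the GIT quotient $\frX(F_2,\SL_2)\cong\CC^3$. Your version is more conceptual, but it leans on exactly the reducibility statement that you flag as the delicate point (equivalently, that a pair with unipotent commutator $\neq I$ is simultaneously triangularizable), which is the ingredient the paper imports from [LMN]. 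One small verification worth recording explicitly in your write-up: that every $(x,y,z)$ with $F=2$ is realized by a commuting diagonal pair follows because, writing $x=s+s^{-1}$ and $y=r+r^{-1}$, the two values $sr+s^{-1}r^{-1}$ and $sr^{-1}+s^{-1}r$ are precisely the roots of $z^2-xyz+x^2+y^2-4=0$, which is the equation $F(x,y,z)=2$.
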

 
 \begin{proof}
We start analysing the singular points of $\{F(x,y,z)=t\}$. We compute the derivatives of $F$,
$$
 \left(\frac{\bd F}{\bd x},\frac{\bd F}{\bd y},\frac{\bd F}{\bd z}\right)=(2x-yz,2y-xz,2z-xy).
$$
For a singular point $2x=yz$, $2y=xz$, $2z=xy$.
From this, we get $x^2=y^2=z^2 =\frac12 xyz$ and hence $F=\frac12 xyz - 2$.
 \begin{itemize}
 \item For $t=-2$, we have $x^2=y^2=z^2=\frac12 xyz=0$, so there is 
 a singular point $(x,y,z)=(0,0,0)$. The leading
 term of $F$ is $x^2+y^2+z^2$, hence the point is an ordinary double point.
  \item For $t=2$, we have $x^2=y^2=z^2= \frac12  xyz=4$.
  Therefore, the singular points are $(2,2,2)$, $(2,-2,-2)$, $(-2,2,-2)$ and $(-2,-2,2)$.
  Let us focus on one of them, say $(2,2,2)$, then the Hessian of $F$ is
 $$
 H_F(2,2,2)=\left(\begin{array}{ccc} 2 & -z & -y \\ -z & 2 & -x \\ -y & -x & 2 \end{array}\right)\Bigg|_{(2,2,2)} =  
 \left(\begin{array}{ccc} 2 & -2 & -2 \\ -2 & 2 & -2 \\ -2 & -2 & 2 \end{array}\right),
$$
 which is non-degenerate, hence it is an ordinary double point. The other singular points are similar.
 \item For $t \neq \pm 2$, we have  $x^2=y^2=z^2=\frac12 xyz= 2+t\neq 0$. Then  
 $x=\pm y$, $x=\pm z$, hence $2x=yz =\pm x^2$, and $x=\pm 2$. This implies
  that $x^2=4=2+t$, hence $t=2$. So for $t\neq \pm 2$, the surface $\frX_t$ is smooth.
  \end{itemize}
  
To proceed, consider the completion of $V(F-t)\subset \CC^3$ in the projective space $\PP^3$. 
This is given by the homogeneous polynomial
 $$
 \hat F_t= x^2u+y^2u+z^2u-xyz-(2+t)u^3,
 $$
for projective coordinates $[x,y,z,u]$. We compute the derivatives 
$$
\left(\frac{\bd \hat F_t}{\bd x},\frac{\bd \hat F_t}{\bd y},\frac{\bd \hat F_t}{\bd z},\frac{\bd \hat F_t}{\bd u}\right)
=(2xu-yz,2yu-xz,2zu-xy,x^2+y^2+z^2-3(2+t)u^2),
$$
and look at a point at infinity, that is, $u=0$. Then the above derivatives reduce to $(-yz,-xz,-xy,x^2+y^2+z^2)$.
This cannot vanish, because for this to be zero, two coordinates should vanish, and hence $x^2+y^2+z^2\neq 0$.
This means that $V(\hat F_t)$ is smooth at the points at infinity.

Now take $t\neq \pm 2$. Then $V=V(\hat F_t)\subset \PP^3$ is a smooth surface of degree $3$. 
By \cite[Example 9.11]{Lewis}, we have the Hodge numbers of $V$ to be 
$h^{1,0}=h^{0,1}=0$, $h^{2,0}=h^{0,2}=0$ and $h^{1,1}=7$. Hence, the $E$-polynomial is $e(V)=q^2+7q+1$, where $q=uv$.
Now, the intersection $V_\infty:=V\cap \{u=0\} =\ell_1\cup \ell_2\cup \ell_3$, consists of $3$ lines, which has $E$-polynomial
$e(V_\infty)=\sum e(\ell_i)- \sum e(\ell_i\cap \ell_j)= 3(q+1)- 3= 3q$. Therefore
  $$
  e(\frX_t)=e(V)-e(V_\infty) = q^2+4q+1.
    $$

Denote $f:\SL_2^2 \to \SL_2$, $f(A,B)=[A,B]$. As in  \cite[Section 4]{LMN} we denote
$X_0=f^{-1}(I)$, $X_1=f^{-1}(-I)$, $X_2=f^{-1}([ J_+])$, $X_3=f^{-1}([J_-])$, so that 
$f^{-1}(\tr^{-1}(2))=X_0\sqcup X_2$ and $f^{-1}(\tr^{-1}(-2))=X_1\sqcup X_3$.
By \cite[Section 4.3]{LMN}, the representations of 
 $$
 \bar X_2= \left\{ (A,B) | \, [A,B]=J_+=\left(\begin{array}{cc} 1 & 0 \\ 1 & 1 \end{array}\right)\right\}
  $$
are of the form $A=\left(\begin{array}{cc} a & 0 \\ b & a^{-1}\end{array}\right)$ and 
$B=\left(\begin{array}{cc} x & 0 \\ y & x^{-1}\end{array}\right)$. Conjugating by
$\left(\begin{array}{cc} t & 0 \\ 0 & 1 \end{array}\right)$, we get the matrices
 $$
 A_t=\left(\begin{array}{cc} a & 0 \\ tb & a^{-1}\end{array}\right),
 B_t=\left(\begin{array}{cc} x & 0 \\ ty & x^{-1}\end{array}\right), [A_t,B_t]=J_{t,+} =
 \left(\begin{array}{cc} 1 & 0 \\ t & 1 \end{array}\right).
  $$
Taking $t\to 0$, we get in the limit representations in $X_0$.
Therefore $X_2$ is contained in the closure of $X_0$. This implies that $\cM_{J_+} \subset \cM_I$.
Hence $\frX_2=\cM_I$.

To compute the $E$-polynomial note that $X=V(\hat F_2)\subset  \PP^3$ appears as a degeneration of $V(\hat F_t)$
 when $t \to 2$. Such degeneration produces $4$ singularities which are ordinary double points.
 Each of them reduces the Betti number $b_2$ by one, hence $b_2(X)=3$. Therefore
 the $E$-polynomial of $X$ is $e(X)=q^2+3q+1$. Now
  $$
   e(\frX_2)=e(X)-e(V_\infty)= q^2+1.
   $$
   
Finally, we look at the case $t=-2$. By \cite[Section 4.2]{LMN}, $\cM_{-I}$ consists of one point, which
has representative $A= \left(\begin{array}{cc} i & 0 \\ 0 & -i\end{array}\right)$,
 $B=\left(\begin{array}{cc} 0 & 1 \\ -1 & 0\end{array}\right)$, with $(t_A,t_B,t_{AB})=(0,0,0)$, the singular
 point of $\frX_{-2}$.
Regarding $\cM_{J_-}$, according to \cite[Section 4.4]{LMN} the matrices of 
 $$
 \bar X_3= \left\{ (A,B) | \, [A,B]=J_-=\left(\begin{array}{cc} -1 & 0 \\ 1 & -1 \end{array}\right)\right\}
  $$
are of the form $A=\left(\begin{array}{cc} a & c \\ b & d\end{array}\right)$ and 
$B=\left(\begin{array}{cc} x & z \\ y & w\end{array}\right)$ with $z=2(x+w)$, $c=-2(a+d)$, $cy+2dw+bz=0$.
This implies that it cannot be $(t_A,t_B)=(z,c)=(0,0)$. So $\cM_{J_-}=\frX_{-2}-\cM_{-I}$, and hence
$\frX_{-2}=\cM_{J_-}\sqcup \cM_{-I}$. 

To compute the $E$-polynomial note that $Y=V(\hat F_{-2})\subset  \PP^3$ appears as a degeneration of $V(\hat F_t)$
 when $t \to -2$. Such degeneration produces one ordinary double point, hence $b_2(Y)=6$ and
 $e(Y)=q^2+6q+1$. Thus
   $$
   e(\frX_{-2})=e(Y)-e(V_\infty)= q^2+3q+1 = e(\cM_{J_-}) + e(\cM_{-I}).
   $$
 \end{proof}

Note that the results of the $E$-polynomials of Theorem \ref{thm11bis} agree with those of Theorem \ref{thm11-intro}. 

\section{Character variety of the genus $2$ surface}

We look at the character variety
 \begin{eqnarray*}
  \cM_2 &=& \frX(\pi_1(\S_2),\SL_2) = \{(A,B,C,D)  \in (\SL_2)^{4} \,|\,  [A,B]\,[C,D]=I\} \sslash \SL_2 \, .
 \end{eqnarray*}
The dimension of $\cM_g=\frX(\pi_1(\S_g),\SL_2)$, for the orientable compact surface $\S_g$ of genus $g$,
 is $\dim \cM_g=6g-6$. Therefore $\dim \cM_2=6$.

\begin{proposition}
 The ring $\cO_{\cM_2}$ has trascendental generators $t_A,t_B,t_C,t_D,t_{AB},t_{AC}$.
\end{proposition}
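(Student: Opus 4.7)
The plan is to combine the ring-generation result Corollary \ref{cor:thm:3} with the dimension count $\dim \cM_2 = 6g-6 = 6$. Since $\cM_2$ is an irreducible $6$-dimensional variety and Corollary \ref{cor:thm:3} furnishes a finite list of generators of $\cO_{\cM_2}$, it suffices to prove that the six proposed traces $t_A, t_B, t_C, t_D, t_{AB}, t_{AC}$ are algebraically independent in $\cO_{\cM_2}$: any such six elements then form a transcendence basis of the function field, and every other generator supplied by Corollary \ref{cor:thm:3} is automatically algebraic over them.

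The defining relation of $\pi_1(\Sigma_2)$ rewrites as $[A,B] = [D,C]$. Taking traces and applying Lemma \ref{lem:t[A,B]} gives
\begin{equation*}
t_A^2 + t_B^2 + t_{AB}^2 - t_A t_B t_{AB} \,=\, t_C^2 + t_D^2 + t_{CD}^2 - t_C t_D t_{CD},
\end{equation*}
which exhibits $t_{CD}$ as a root of a monic quadratic over $\bk[t_A, t_B, t_C, t_D, t_{AB}]$. Further scalar identities arise by multiplying $[A,B] = [D,C]$ on the left or right by $A, B, C, D$ (or their products) before tracing and then simplifying via Proposition \ref{prop:2}; these are expected to express $t_{AD}, t_{BC}, t_{BD}$ and the four triple traces algebraically over $\bk[t_A, t_B, t_C, t_D, t_{AB}, t_{AC}]$. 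This explains why six is the right count.

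To make the algebraic independence rigorous I would argue geometrically, by showing that the morphism
\begin{equation*}
\Phi : \cM_2 \lto \bk^6, \qquad [(A,B,C,D)] \longmapsto (t_A, t_B, t_C, t_D, t_{AB}, t_{AC}),
\end{equation*}
is dominant. Given a generic target $(x,y,z,w,u,v)$, Corollary \ref{cor:k=2} supplies a conjugacy class of pairs $(A,B)$ with $(t_A, t_B, t_{AB}) = (x, y, u)$; fix a representative and set $\eta = [B, A]$. Then we must locate $(C,D) \in N_\eta := \{(C,D) : [C,D] = \eta\}$ with $t_C = z$, $t_D = w$, $t_{AC} = v$. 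The variety $N_\eta$ is $3$-dimensional for generic $\eta$, and the stabilizer $\Stab(\eta)$ acts on it by simultaneous conjugation; for $\eta$ diagonal of type $\xi_t$, this stabilizer is the $1$-dimensional maximal torus $T$. Imposing $t_C = z$ and $t_D = w$ generically cuts $N_\eta$ down to a single $T$-orbit, and the further value $t_{AC} = v$ pins down a point of that orbit provided $t_{AC}$ varies non-trivially along it.

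The main obstacle is this last non-triviality, which I would verify by direct computation. After diagonalising $\eta$ and writing $A = \begin{pmatrix} a & b \\ c & d \end{pmatrix}$, $C = \begin{pmatrix} p & q \\ r & s \end{pmatrix}$, the torus action $C \mapsto g_\lambda C g_\lambda^{-1}$ with $g_\lambda = \diag(\lambda, \lambda^{-1})$ produces
\begin{equation*}
\tr(A \cdot g_\lambda C g_\lambda^{-1}) \,=\, ap + ds + b\lambda^{-2} r + c \lambda^2 q,
\end{equation*}
a non-constant function of $\lambda$ on the Zariski-open locus $bcqr \neq 0$. This establishes dominance of $\Phi$ on a dense open set of $\bk^6$, hence the algebraic independence of the six functions, and together with $\dim \cM_2 = 6$ it completes the proof that they are transcendental generators of $\cO_{\cM_2}$.
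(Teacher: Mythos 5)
Your proposal is correct in strategy but takes a genuinely different route from the paper. The paper works entirely with trace identities: from $[A,B][C,D]=I$ it uses Lemma \ref{lem:t[A,B]} and the explicit commutator expansion (\ref{eqn:ttt}) to produce polynomial relations exhibiting $t_{CD}$, then $t_{BC}$, $t_{AD}$, $t_{BD}$ (and, via (\ref{eqn:XY2}), the triple traces) as algebraic over $\bk[t_A,t_B,t_C,t_D,t_{AB},t_{AC}]$, with algebraic independence of the six then forced by $\dim\cM_2=6$. You run the implication the other way: prove algebraic independence directly by showing the trace map $\Phi:\cM_2\to\bk^6$ is dominant, and let the dimension count deliver the algebraicity of all remaining generators from Corollary \ref{cor:thm:3}. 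Your fibration-style construction (choose $(A,B)$ realizing $(x,y,u)$, then move $(C,D)$ within $N_\eta$ by $\Stab(\eta)$ to adjust $t_{AC}$) is a nice use of the twisted varieties of Section 6, and your computation of $\tr(A\,g_\lambda Cg_\lambda^{-1})$ is correct; what your route buys is avoiding the laborious ``unravelling'' of explicit relations, at the price of a genericity analysis.

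Two steps need to be closed before this is a proof. First, the reduction ``algebraic independence plus $\dim\cM_2=6$ implies everything else is algebraic over the six'' uses irreducibility of $\cM_2$ essentially: if $\cM_2$ had a component on which $\Phi$ were not dominant, a generator could fail to be algebraic over the six there. Irreducibility is true (it follows from irreducibility of $\Hom(\pi_1(\Sigma_g),\SL_2(\CC))$ for $g\geq 2$), but you assert it without proof or reference, whereas the paper's identities hold pointwise and need no such input. Second, your dominance argument is only established on the locus $bcqr\neq 0$, and you never verify that for a generic target $(x,y,z,w,u)$ the construction can be arranged so that at least one of $br$, $cq$ is nonzero; if both vanished for all admissible choices, $t_{AC}$ would be constant on every available $\Stab(\eta)$-orbit and the image of $\Phi$ could a priori be only $5$-dimensional. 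This is fixable --- for instance, $C$ commuting with the regular element $\eta=[C,D]$ forces $t_D=0$ and $A$ commuting with $[B,A]$ forces $t_B=0$, so the fully diagonal degenerations are non-generic, and the triangular cases need a similar (short) argument --- but as written it is a gap rather than a verified step.
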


\begin{proof}
We have that $\cM_2 \subset \frX_4= \frX(F_4,\SL_2)$ and $\dim \frX_4=9$, generated by
$t_A,t_B,t_C,t_D$, $t_{AB},t_{AC},t_{BC}, t_{AD}, t_{BD}$, where $t_{CD}$ is algebraically
dependent on the previous ones, by Corollary \ref{c:p:cor}.
Using $[A,B]=[C,D]^{-1}$, we have $t_{[A,B]}=t_{[C,D]}$, and using Equation (\ref{eqn:[A,B]}),
 $$
 t_A^2+t_B^2+t_{AB}^2-t_At_Bt_{AB} =  t_C^2+t_D^2+t_{CD}^2-t_Ct_Dt_{CD},
$$
thereby $t_{CD}$ is algebraically dependent on $t_A,t_B,t_C,t_D,t_{AB}$.
  
Using Equation (\ref{eqn:ttt}), we get
 $$
  t_{[A,B]C} =-t_{AB}t_Bt_{AC}+t_{AB}t_{ABC} +t_B^2t_C-t_Bt_{BC}+t_At_{AC} -t_C
  $$
From $[A,B]\, [C,D]=I$, we rewrite $[A,B]C =DCD^{-1}$ which implies that
 $$
  -t_{AB}t_Bt_{AC}+t_{AB}t_{ABC} +t_B^2t_C-t_Bt_{BC}+t_At_{AC} -t_C= t_C
  $$
Using (\ref{eqn:XY2}), which is an algebraic dependence of $t_{ABC}$ on 
$t_A,t_B,t_C,t_{AB},t_{AC},t_{BC}$, we get unravelling the above an algebraic equation, 
and then we isolate $t_{BC}$ as algebraic dependent of
$t_A,t_B,t_C,t_{AB},t_{AC}$.

Now we use the equation $[A,B]\, [C,D]=I$, to get
$[C,D]A=BAB^{-1}$, and working as before we get an algebraic dependence of $t_{AD}$
on $t_A,t_C,t_D,t_{CD},t_{AC}$. But using the dependence of $t_{CD}$ on $t_{AB}$, we get
that $t_{AD}$ is algebraically dependent of 
$t_A,t_B,t_C,t_D, t_{AB},t_{AC}$.

Finally take $[A,B]\, [C,D]=I$, to rewrite 
$D^{-1}[A,B]=CD^{-1}C^{-1}$,
and work analogously to get an algebraic dependence of $t_{BD}$ in terms of 
$t_A,t_B,t_D,t_{AB},t_{AD}$. Using the previous paragraph, we get that
$t_{BD}$ is algebraic dependent on 
$t_A,t_B,t_C,t_D,t_{AB},t_{AC}$. 
\end{proof}

\end{document}